\DeclarePairedDelimiter{\floor}{\lfloor}{\rfloor}
\theoremstyle{plain}
\newtheorem{thm}{Theorem}[section]
\newtheorem{lemma}[thm]{Lemma}
\newtheorem{prop}[thm]{Proposition}
\newtheorem{cor}[thm]{Corollary}
\newtheorem*{thm*}{Theorem}
\newtheorem*{lemma*}{Lemma}
\newtheorem*{prop*}{Proposition}
\newtheorem*{cor*}{Corollary}
\newtheorem*{conj*}{Conjecture}
\theoremstyle{definition}
\newtheorem{defn}[thm]{Definition}
\newtheorem{ex}[thm]{Example}
\theoremstyle{remark}
\newcommand{\rr}{\mathbb{R}}
\newcommand{\conv}{\mathrm{conv}}
\newcommand{\ind}{\mbox{$\perp \kern-5.5pt \perp$}}
\newcommand{\shape}{\mathrm{shape}}
\title{Exchangeable and Sampling Consistent Distributions on Rooted Binary Trees}
\author{Benjamin Hollering and Seth Sullivant}
\begin{document}
\maketitle

\begin{abstract}
We introduce a notion of finite sampling consistency for phylogenetic trees and 
show that the set of finitely sampling consistent and exchangeable distributions 
on $n$ leaf phylogenetic trees is a polytope. We use this polytope to show that 
the set of all exchangeable and infinite sampling consistent distributions on 4 leaf phylogenetic 
trees is exactly Aldous' beta-splitting model and give a description of some of 
the vertices for the polytope of distributions on 5 leaves. We also introduce a 
new semialgebraic set of exchangeable and sampling consistent models we call the multinomial model and use it to characterize the set of exchangeable and sampling consistent distributions. 
\end{abstract}


\section{Introduction}
Leaf-labelled binary trees, which are commonly called phylogenetic trees, 
are frequently used to represent the evolutionary relationships between species. 
In this paper we will restrict our attention to rooted binary trees and our label set 
for a tree with $n$ leaves will always be $[n] = \{1,2, \ldots n\}$ and  call such 
trees $[n]$-trees, the set of which we denote $RB_L(n)$. 

Processes for generating random $[n]$-trees play an important role in 
phylogenetics.  Two common examples are the uniform distribution (where
a tree is chosen uniformly at random from among all trees in $RB_L(n)$)
and the Yule-Harding distribution (a simple Markov branching process).
Some other examples of random tree models include Aldous' $\beta$-splitting 
model \cite{al93}, the $\alpha$-splitting model \cite{ford06}, 
and the coalescent process (which generates trees with edge lengths) 
\cite{wakeley2008}.
Two features common to all these random tree processes and desirable
for any such tree process is that they are
exchangeable and sampling consistent.  

Let $p_n$ denote a probability distribution on $RB_L(n)$.
\emph{Exchangeability} refers to the fact that relabeling the leaves of the
tree does not change its probability. That is, for all $T \in RB_L(n)$ and $\sigma \in S_n$,
$p_n(T) = p_n(\sigma T)$.  Exchangeability is a natural condition since it does not
allow the names of the species to play any special role in the probability distribution.
A family of distributions, $\{p_n\}_{n=2}^\infty$, on trees has 
\emph{sampling consistency} if for each $n$, the distribution $p_n$, 
which is on $[n]$-trees, can be realized as the marginalization of distributions $p_m$, 
which is on $[m]$-trees, for $m > n$. That is the probability of 
a $[n]$-tree, $T$, under $p_n$ can be written as 
\[
\pi_n(p_m) (T) = p_n^m(T) = \sum_{\{S \in RB_L(m) | T = S|_{[n]}\}} p_m(S).
\]
Sampling consistency is a natural condition for a random tree model because it means that randomly missing species do not affect the underlying distribution on the species that were observed.  

The goal of this paper is to study the structure of finitely sampling consistent distributions on rooted binary trees. In particular, we aim to obtain a finite deFinetti-type theorem for these trees in the style of Diaconis' Theorem 4 in \cite{di97}. 
Our motivation is two-fold.  First of all,  
there has been significant work on understanding the set of exchangeable, 
sampling consistent distributions on other discrete objects, including rooted trees. A classic result 
in this theory is deFinetti's Theorem for infinitely exchangeable sequences of 
binary random variables which shows that every subsequence of the 
infinite sequence can be expressed as a mixture of independent and identically distributed sequences. This does not hold for finitely exchangeable sequences but
Diaconis later developed a finite form of deFinetti's theorem. He showed that 
if a finite exchangeable sequence of binary random variables, $\{X_i\}_{i=1}^{n}$, 
can be extended to an exchangeable sequence, $\{X_i\}_{i=1}^{m}$ where $m > n$, 
then the original sequence can be approximated with a mixture of independent and 
identically distributed sequences with error $O(\frac{1}{m})$ \cite{di97}. A substantial amount of work has been done on exchangeable arrays (see \cite{diaconisJanson} for example) as well, which has been used to prove deFinetti theorems for other discrete objects. For instance, Lauritzen, Rinaldo, and Sadeghi recently developed a deFinetti Theorem for exchangeable random networks \cite{la17}. 

As previously mentioned, there has already been considerable work characterizing exchangeable and sampling consistent distributions on trees using weighted real trees as limit objects in \cite{forman18,formanPitman18,winkel08}. In \cite{winkel08} a characterization of the exchangeable and sampling consistent Markov branching models we discuss in Section \ref{sec:splitModels} is obtained. A true deFinetti theorem for trees is conjectured in \cite{formanPitman18} and proven in Theorem 3 of \cite{forman18}. The approach taken in these papers is to characterize all infinitely sampling consistent distributions on trees using a tree-limit like object called a weighted real tree. In this paper, we instead take a geometric and combinatorial approach to the study of exchangeable and finitely sampling consistent distributions on binary trees and examine what happens as we take the limit. 

A second motivation comes from the combinatorial phylogenetics problem
of studying properties of the distribution of the maximum agreement 
subtree of pairs of random trees.
Let $T \in   RB_L(n)$ and $S \subseteq [n]$.  The restriction tree $T|_S$
is the rooted binary tree with leaf label set $S$
obtained by removing all leaves of $T$ not in $S$ and suppressing all 
vertices of degree 2 except the root.
Two trees, $T_1, T_2 \in RB_L(n)$, agree on a set 
$S \subseteq [n]$ if $T_1|_S = T_2|_S$. A maximum agreement set is an agreement set of the
largest size for $T_1$ and $T_2$.  
The size of a maximum agreement subtree of these two trees is the 
cardinality of the largest subset $S$ that $T_1$ and $T_2$ agree on and is denoted $MAST(T_1,T_2)$. 
If $S$ is an agreement set with $|S| = MAST(T_1,T_2)$ then 
the resulting tree $T_1|_S = T_2|_S$ is a \emph{maximum agreement subtree} of $T_1$ and $T_2$.   

Understanding the distribution of $MAST(T_1, T_2)$ for random tree distributions
would help in conducting hypothesis tests that the similarity between the trees 
is no greater than the similarity between random trees. For example, it was 
suggested in \cite{devi07} that $MAST(T_1, T_2)$ could be used to test the 
hypothesis that no cospeciation occurred between a family of host species 
and a family of parasite species that prey on them. The study of the 
distribution of $MAST(T_1,T_2)$ for random trees $T_1,T_2$ is primarily 
conducted with the assumption that $T_1$ and $T_2$ are drawn from an exchangeable, 
sampling consistent distribution on rooted binary trees. 
Bryant, Mackenzie, and Steel began the study of the distribution of $MAST(T_1,T_2)$ 
and obtained some first bounds on $\mathbb{E}(MAST(T_1,T_2))$ for random trees $T_1$ and $T_2$ drawn from the Uniform or Yule-Harding distributions \cite{br03}. 
Later work on the distribution obtained an upper bound on the order of 
$O(\sqrt{n})$ for $\mathbb{E}(MAST(T_1,T_2))$ when $T_1$ and $T_2$ are drawn from any exchangeable, 
sampling consistent distribution \cite{be15}. A lower bound on the order of $\Omega(\sqrt{n})$ 
has been conjectured for all exchangeable, sampling consistent distributions as well 
but this remains an open problem. Our hope in pursuing this project is that 
developing a better understanding of the set of all exchangeable sampling
consistent distributions might shed light on this conjecture.

In this paper we study the structure of exchangeable, 
sampling consistent distributions on leaf labelled, rooted binary trees. 
We introduce a notion of a polytope of exchangeable and finitely sampling consistent distributions. 
We use it to study the set of exchangeable and sampling consistent distributions 
on trees and get some characterizations for trees with a small number of leaves. 
We show that set of all exchangeable and sampling consistent distributions 
on four leaf trees come from the $\beta$-splitting model that was first introduced by Aldous in \cite{al93}. 
We have not been able to find a similar characterization for exchangeable and 
sampling consistent distributions on five leaf trees but we describe some of 
the vertices of the polytope of exchangeable and finitely sampling consistent 
distributions. We also introduce a new exchangeable and sampling consistent 
model on trees, called the multinomial model, and show that 
every sampling consistent and exchangeable distribution can be 
realized as a convex combination of limits of sequences of multinomial distributions. 


\section{Exchangeability and Finite Sampling Consistency}
In this section we describe how the set of exchangeable distributions 
relates to the set of all distributions on leaf labelled, rooted binary trees. 
We then introduce a notion of finite sampling consistency and discuss 
how it relates to traditional sampling consistency.

Recall that $RB_L(n)$ denotes the set of all leaf labelled, 
rooted binary trees with label set $[n]$, which we call $[n]$-trees, and that $|RB_L(n)| = (2n-3)!!$. 
The set of all distributions on $RB_L(n)$ is the probability 
simplex $\Delta_{(2n-3)!!-1} \subseteq \mathbb{R}^{(2n-3)!!}$ 
where the coordinates are indexed by $[n]$-trees. 
The symmetric group $S_n$ denotes the group of permutations of $[n]$.  
For each $\sigma \in S_n$ and $T \in RB_L(n)$ let $\sigma T$ denote
the tree obtained by applying $\sigma$ to the leaf labels.

\begin{defn}
A distribution $p$ on $RB_L(n)$ is \emph{exchangeable} 
if for all permutations $\sigma \in S_n$ and $[n]$-trees $T \in RB_L(n)$, 
$p(T) = p(\sigma T)$. The set of all exchangeable distributions on $RB_L(n)$ is
denoted $EX_n$. 
\end{defn}

As previously mentioned, exchangeability  requires that the 
probability of a $[n]$-tree under a particular distribution depend only on 
the shape of the tree. Thus we
only need to consider distributions on the set of tree shapes. 
Let $RB_U(n)$ 
denote the set of unlabelled rooted binary trees, which we may also 
call trees or tree shapes. This idea is summarized in the next 
lemma which is  the $[n]$-tree analogue of Lemma 2 in \cite{la17}. 

\begin{lemma}\label{lemma:exnsimplex}
The set of exchangeable distributions on $RB_L(n)$,  $EX_n$, is a simplex of dimension 
$|RB_U(n)|-1$ with coordinates indexed by tree shapes.
\end{lemma}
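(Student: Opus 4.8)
The plan is to exhibit $EX_n$ explicitly as the convex hull of finitely many affinely independent points, one for each tree shape, and thereby identify it with a simplex of the stated dimension. First I would observe that the symmetric group $S_n$ acts on the probability simplex $\Delta_{(2n-3)!!-1}$ by permuting coordinates (via $\sigma\cdot e_T = e_{\sigma T}$), and that $EX_n$ is precisely the set of fixed points of this action, equivalently the set of distributions constant on each $S_n$-orbit of $RB_L(n)$. The orbits of this action are in bijection with $RB_U(n)$, since two $[n]$-trees lie in the same orbit if and only if they have the same shape; write $\mathcal{O}_\tau$ for the orbit corresponding to the shape $\tau \in RB_U(n)$.

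Next I would define, for each shape $\tau$, the uniform distribution $u_\tau$ on the orbit $\mathcal{O}_\tau$, i.e. $u_\tau(T) = 1/|\mathcal{O}_\tau|$ if $\shape(T) = \tau$ and $0$ otherwise. Each $u_\tau$ is exchangeable, and the key claim is that every $p \in EX_n$ is a unique convex combination $p = \sum_{\tau \in RB_U(n)} \lambda_\tau\, u_\tau$: indeed, since $p$ is constant on $\mathcal{O}_\tau$ with common value $c_\tau$, we must have $\lambda_\tau = c_\tau |\mathcal{O}_\tau|$, and these weights are nonnegative and sum to $\sum_\tau c_\tau |\mathcal{O}_\tau| = \sum_{T} p(T) = 1$. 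So $EX_n = \conv\{u_\tau : \tau \in RB_U(n)\}$. Finally I would check that the points $\{u_\tau\}$ are affinely independent: their supports are pairwise disjoint (the distinct orbits partition $RB_L(n)$), so any affine dependence $\sum_\tau a_\tau u_\tau = 0$ with $\sum_\tau a_\tau = 0$ forces, by looking at coordinates in a single orbit, every $a_\tau = 0$. A set of $k$ affinely independent points spans a simplex of dimension $k-1$, giving $\dim EX_n = |RB_U(n)| - 1$; the identification of the barycentric coordinate $\lambda_\tau$ with (a rescaling of) the mass $p$ assigns to shape $\tau$ makes precise the statement that the coordinates are indexed by tree shapes.

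None of the steps is genuinely hard; the mild care needed is in the bookkeeping between "probability on labelled trees that is orbit-constant" and "probability on shapes," i.e. tracking the orbit-size factors $|\mathcal{O}_\tau|$ correctly so that the vertices $u_\tau$ (uniform on an orbit) rather than, say, the indicator vectors $e_\tau/1$ are what one takes the convex hull of. I expect the only point a careful reader might want spelled out is affine independence, which as noted follows immediately from disjointness of supports. This is the same argument structure as Lemma 2 of \cite{la17} for exchangeable networks, transported to the orbit structure of $S_n$ acting on $RB_L(n)$.
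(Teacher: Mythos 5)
Your proposal is correct and follows essentially the same route as the paper: the vertices $u_\tau$ are exactly the paper's distributions $p_T$ (uniform on the orbit of a shape), the convex-combination weights $c_\tau|\mathcal{O}_\tau|$ match, and affine independence is established in both cases by the disjointness of supports across orbits. The only cosmetic difference is that you make the $S_n$-orbit structure explicit up front, which the paper leaves implicit.
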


\begin{proof}
First we define a distribution $p_{T} \in EX_n$ for each tree 
shape $T \in RB_U(n)$. To do so, we let $O(T)$ be the set of 
trees $T' \in RB_L(n)$ such that $\shape(T') = T$. For any tree $S \in RB_L(n)$ we set  
\[
p_T(S) = \begin{cases}
            \frac{1}{|O(T)|} & shape(S) = T \\
            0 & shape(S) \neq T.
         \end{cases}
\]
Then $p_T \in EX_n$ since it is a probability distribution 
on trees and all trees of the same shape have the same probability. 
We claim that $EX_n = \conv\left( \{p_T : T \in RB_U(n) \}  \right)$, where $\conv(A)$ 
denotes the convex hull of the set $A$. 
Since $p_T \in EX_n$ for all $T \in RB_U(n)$, it is enough to show that 
any distribution $p \in EX_n$ can be written as a convex combination of the $p_T$. 
If $p \in EX_n$, then the probability of any tree $T' \in RB_L(n)$ depends 
only on the shape of $T'$ not the leaf labelling so we can write
\[
p = \sum_{T \in RB_U(n)} p(T) \cdot |O(T)| \cdot p_T
\]
where $p(T)$ represents the probability of any $[n]$-tree in $RB_L(n)$ with shape $T$. 
Since the original $p$ is a probability distribution on all leaf labelled trees
the weights in the linear combination are nonnegative and sum to $1$. 

Lastly we note that the vectors $p_T$ are affinely independent since there 
is no overlap of coordinate indices where the entries in $p_T$ are nonzero. 
So $EX_n = \conv\left( \{p_T : T \in RB_U(n) \}  \right)$
 is a simplex and has coordinates indexed by $RB_U(n)$. 
\end{proof}

Lemma \ref{lemma:exnsimplex} allows us to move from studying exchangeable 
distributions on leaf labelled $[n]$-trees to all distributions on unlabelled trees. 
We will primarily focus on understanding the set of sampling consistent distributions 
within $EX_n$ now. First recall that for $p_m \in EX_m$ the marginalization or 
projection map $\pi_n$, gives a new distribution 
$p_n^m$ on $RB_L(n)$ for $n < m$, defined for all $T \in RB_L(n)$ by
\[
\pi_n(p_m)(T) = \sum_{\{S \in RB_L(m) | T = S|_{[n]}\}} p_m(S) 
\]
We will use this marginalization map to define a notion of finite sampling consistency.

\begin{defn}
A family of distributions $\{p_k\}_{k=n}^m$ is \emph{finitely sampling consistent} 
or \emph{$m$-sampling consistent}, if for each $n \leq k < m$, $p_k = \pi_k(p_m)$. 
We denote the set of all distributions in  $EX_n$ that are $m$-sampling consistent
by 
\[
EX_n^m = \pi_n(EX_m).
\]
\end{defn}

It is immediate that if a distribution in $EX_n$ is $m$-sampling consistent, 
then for any $k$, such that $n < k < m$, the distribution is also $k$-sampling consistent.
This leads to the following:

\begin{lemma}\label{lem:monotone}
For all $m > k > n$, 
\[
EX_n^m \subseteq EX_n^k.
\]
\end{lemma}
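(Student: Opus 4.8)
The plan is to reduce the claim $EX_n^m \subseteq EX_n^k$ to the observation (stated just before the lemma) that marginalization maps compose correctly, together with the fact that the image of an $m$-sampling consistent distribution under $\pi_k$ lands in $EX_k$. First I would unpack the definitions: by definition $EX_n^m = \pi_n(EX_m)$ and $EX_n^k = \pi_n(EX_k)$, so it suffices to show that every distribution of the form $\pi_n(p_m)$ with $p_m \in EX_m$ can also be written as $\pi_n(p_k)$ for some $p_k \in EX_k$. The natural candidate is of course $p_k := \pi_k(p_m)$.

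The two things to check are then (i) that $\pi_k(p_m) \in EX_k$, i.e. that marginalizing an exchangeable distribution on $RB_L(m)$ down to $RB_L(k)$ yields an exchangeable distribution, and (ii) that $\pi_n\big(\pi_k(p_m)\big) = \pi_n(p_m)$, i.e. that the marginalization maps compose. For (i), I would note that for $\tau \in S_k$, acting by $\tau$ commutes with the restriction operation $S \mapsto S|_{[k]}$ in the appropriate sense: the fibers $\{S : T = S|_{[k]}\}$ are permuted among themselves by the $S_m$ action extending $\tau$, and exchangeability of $p_m$ under that extended permutation gives $\pi_k(p_m)(\tau T) = \pi_k(p_m)(T)$. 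For (ii), I would argue that restricting a tree on $[m]$ first to $[k]$ and then to $[n]$ gives the same result as restricting directly to $[n]$, since $n < k < m$; summing $p_m$ over the nested fibers then telescopes to summing over the single fiber $\{S \in RB_L(m) : T = S|_{[n]}\}$. Chaining these, $\pi_n(p_m) = \pi_n(\pi_k(p_m)) \in \pi_n(EX_k) = EX_n^k$, which is exactly what we want.

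The only mild subtlety—and the step I would write out most carefully—is part (i), the exchangeability of the marginal, since one must be precise about how a permutation of $[k]$ extends to a permutation of $[m]$ and why the restriction map is equivariant; but this is exactly the content of the composition/equivariance remark made in the text, and it is routine. Everything else is a bookkeeping argument with finite sums, so there is no real obstacle; the lemma is essentially a formal consequence of the fact that the family of restriction maps is functorial and the already-established closure of exchangeability under marginalization.
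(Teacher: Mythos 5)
Your proposal is correct and matches the paper's reasoning: the paper treats this lemma as immediate from the observation that an $m$-sampling consistent distribution is automatically $k$-sampling consistent for $n < k < m$, which is precisely the composition $\pi_n = \pi_n \circ \pi_k$ together with $\pi_k(EX_m) \subseteq EX_k$ that you spell out. Your write-up simply makes explicit the equivariance and telescoping details that the paper leaves unstated.
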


A distribution in $EX_n$, is sampling consistent if it is part of a $m$-sampling 
consistent family of distributions for all $m>n$. In other words, 
a distribution is sampling consistent if it is in $EX_n^m$ for all $m>n$. 
Thus we can define the following notation for the set of 
exchangeable distributions on $RB_L(n)$ that are sampling consistent:
\[
EX_n^\infty  :=  \cap_{m  = n}^\infty  EX_n^m.
\]

\begin{lemma}\label{lemma:exnm}
Let $p_T \in EX_m$ be defined as it is in Lemma \ref{lemma:exnsimplex}, 
then  
\[
EX_n^m = \conv\left(\{\pi_n(p_T) : T \in RB_U(m)  \} \right).
\]
\end{lemma}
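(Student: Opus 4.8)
The plan is to exploit two facts that are already available: the marginalization map $\pi_n$ is (the restriction to a simplex of) a \emph{linear} map, and by Lemma \ref{lemma:exnsimplex} the polytope $EX_m$ is exactly $\conv(\{p_T : T \in RB_U(m)\})$. Since a linear map carries the convex hull of a finite set of points to the convex hull of the images of those points, and since $EX_n^m$ is by definition $\pi_n(EX_m)$, the identity should fall out immediately. The first step is simply to observe linearity: for a fixed $T \in RB_L(n)$ the quantity $\pi_n(q)(T) = \sum_{\{S \in RB_L(m)\,:\,S|_{[n]} = T\}} q(S)$ is a fixed linear functional of the vector $q \in \mathbb{R}^{(2m-3)!!}$, so $\pi_n(\lambda q + \mu q') = \lambda \pi_n(q) + \mu \pi_n(q')$ for all scalars $\lambda, \mu$.

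Granting this, I would prove the two inclusions in parallel. For $EX_n^m \subseteq \conv(\{\pi_n(p_T)\})$: given $p_m \in EX_m$, Lemma \ref{lemma:exnsimplex} lets me write $p_m = \sum_{T \in RB_U(m)} \lambda_T\, p_T$ with $\lambda_T \ge 0$ and $\sum_T \lambda_T = 1$, whence linearity gives $\pi_n(p_m) = \sum_{T \in RB_U(m)} \lambda_T\, \pi_n(p_T)$, a convex combination of the $\pi_n(p_T)$. For the reverse inclusion: any point of $\conv(\{\pi_n(p_T) : T \in RB_U(m)\})$ is of the form $\sum_T \lambda_T\, \pi_n(p_T)$ with $\lambda_T \ge 0$, $\sum_T \lambda_T = 1$; by linearity this equals $\pi_n\bigl(\sum_T \lambda_T\, p_T\bigr)$, and $\sum_T \lambda_T\, p_T$ lies in $EX_m$ because $EX_m$ is convex and contains each $p_T$. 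Hence the point lies in $\pi_n(EX_m) = EX_n^m$.

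There is no serious obstacle here; the lemma is essentially a one-line consequence of "linear image of a convex hull" once Lemma \ref{lemma:exnsimplex} is in hand. The only point deserving a sentence of care is the bookkeeping already folded into the definition $EX_n^m = \pi_n(EX_m)$, namely that $\pi_n$ actually sends exchangeable distributions on $RB_L(m)$ to exchangeable distributions on $RB_L(n)$ (so that, in particular, each $\pi_n(p_T)$ is a bona fide point of $EX_n$). This follows by lifting a permutation $\sigma \in S_n$ to $\tilde\sigma \in S_m$ that fixes $n+1, \dots, m$ and using $(\tilde\sigma S)|_{[n]} = \sigma(S|_{[n]})$ to reindex the fiber sum, showing $\pi_n$ commutes with relabeling. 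I would either invoke this directly or relegate it to a remark, since the content of the lemma itself is exactly the convexity argument above.
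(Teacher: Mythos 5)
Your proof is correct and follows essentially the same route as the paper: both arguments rest on the decomposition of $EX_m$ from Lemma \ref{lemma:exnsimplex} together with the linearity of $\pi_n$ (which the paper carries out explicitly as an interchange of the order of summation). Your treatment of the inclusion $\conv(\{\pi_n(p_T)\}) \subseteq EX_n^m$ is in fact slightly more complete than the paper's one-line assertion, but the content is the same.
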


\begin{proof}
Clearly it holds that $\conv(\{\pi_n(p_T)  : T \in RB_U(m)  \}) \subseteq EX_n^m$ since
$\pi_n(p_T) \in EX_n^m$ for all $T \in RB_U(m)$. It is enough to show that
if we have a distribution $p_n^m \in EX_n^m$, then it can be written 
as a convex combination of the $\pi_n(p_T)$.  
If $p_n^m \in EX_n^m$, then there exists $p_m \in EX_m$ such that
$\pi_n(p_m) = p_n^m$. Since $p_m \in EX_m$, we know from Lemma 2.2 that we can write
$p_m = \sum_{T \in RB_U(n)} p_m(T) \cdot |O(T)| \cdot p_T$. 
Then evaluating $\pi_n(p_m)$ at a $[n]$-tree $S \in RB_L(n)$ gives 
\[
\pi_n(p_m)(S) = \sum_{\{Q \in RB_L(m) | S = Q|_{[n]}\}} \sum_{T \in RB_U(m)} 
p_m(T) \cdot |O(T)| \cdot p_T(Q)
\]
Changing the order of summation we have
\[
\pi_n(p_m)(S) = \sum_{T \in RB_U(m)} p_m(T) \cdot |O(T)| \sum_{\{Q \in RB_L(m) | S = Q|_{[n]}\}} p_T(Q)
\]
but $\sum_{\{Q \in RB_L(m) | S = Q|_{[n]}\}} p_T(Q) = \pi_n(p_T)(S)$ so we get that 
\[
\pi_n(p_m)(S) = \sum_{T \in RB_U(m)} p_m(T) \cdot |O(T)| \cdot \pi_n(p_T) (S)
\]
which shows that $p_n^m = \pi_n(p_m)$ can be written 
as a convex combination of the $\pi_n(p_T)$. 
\end{proof}

\begin{ex}
While it will be the case that $EX_n^m = \conv(\{\pi_n(p_T) : T \in RB_U(m) \})$, 
not every  $\pi_n(p_T)$ will be a vertex of $EX_n^m$. Figure \ref{fig:ex57} illustrates this. 
\end{ex}

\begin{figure}
    \centering
    \resizebox{!}{2in}{\includegraphics{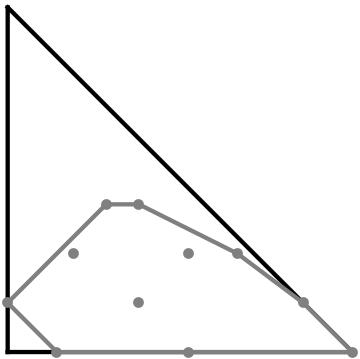}}
    \caption{The projection of $EX_5^7$ onto the first two coordinates of the simplex $EX_5$. The gray points correspond to the points $\pi_n(p_T)$ for $T \in RB_U(7)$.}
    \label{fig:ex57}
\end{figure}

Lemma \ref{lemma:exnm} implies that understanding how 
the marginalization map acts on the vertices of $EX_m$ will allow us to 
compute all of $EX_n^m$. The following lemma and corollary will give 
us a method for calculating the vertices of $EX_n^m$ by computing subtree densities.

\begin{lemma}
Let $S \in RB_L(n)$ and $T \in RB_U(m)$. 
Also let $c_T(S) = |\{Q \in RB_L(m) | S = Q|_{[n]}, shape(Q) = T \}|$. 
Then $\pi_n(p_T)(S) = \frac{c_T(S)}{|O(T)|}$. 
\end{lemma}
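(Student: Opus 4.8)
The plan is to unwind the definitions of $\pi_n(p_T)(S)$ and $p_T$ and group the terms in the defining sum of the marginalization map according to whether they contribute $\frac{1}{|O(T)|}$ or $0$. Recall from Lemma \ref{lemma:exnsimplex} that $p_T$ is the uniform distribution on $O(T)$, the set of $[m]$-trees of shape $T$; that is, $p_T(Q) = \frac{1}{|O(T)|}$ if $\shape(Q) = T$ and $p_T(Q) = 0$ otherwise. By definition of the projection map,
\[
\pi_n(p_T)(S) = \sum_{\{Q \in RB_L(m) \,|\, S = Q|_{[n]}\}} p_T(Q).
\]

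Next I would split the index set of this sum into the $Q$ with $\shape(Q) = T$ and those with $\shape(Q) \neq T$. The latter contribute nothing since $p_T$ vanishes on them, so
\[
\pi_n(p_T)(S) = \sum_{\{Q \in RB_L(m) \,|\, S = Q|_{[n]},\ \shape(Q) = T\}} \frac{1}{|O(T)|} = \frac{1}{|O(T)|} \cdot \bigl|\{Q \in RB_L(m) \mid S = Q|_{[n]},\ \shape(Q) = T\}\bigr|.
\]
The cardinality appearing here is exactly $c_T(S)$ by definition, which gives $\pi_n(p_T)(S) = \frac{c_T(S)}{|O(T)|}$, as claimed.

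This argument is essentially a bookkeeping exercise, so there is no serious obstacle; the only point requiring a sentence of care is that every term retained in the sum genuinely equals the constant $\frac{1}{|O(T)|}$ — this is immediate from the case definition of $p_T$ once we have restricted to $\shape(Q) = T$ — and that $|O(T)| \neq 0$, which holds because every shape in $RB_U(m)$ is realized by at least one labelled tree in $RB_L(m)$. One might also remark that $c_T(S)$ can be $0$ (when no $[m]$-tree of shape $T$ restricts to $S$ on $[n]$), in which case $\pi_n(p_T)(S) = 0$, consistent with the formula.
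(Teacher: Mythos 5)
Your proof is correct and follows essentially the same route as the paper's: unwind the definition of $\pi_n(p_T)(S)$, discard the terms with $\shape(Q) \neq T$ since $p_T$ vanishes there, and count the remaining terms, each contributing $\frac{1}{|O(T)|}$. The extra remarks about $|O(T)| \neq 0$ and the case $c_T(S) = 0$ are harmless additions.
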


\begin{proof}
By definition of the map $\pi_n$
\[
\pi_n(p_T)(S) = \sum_{\{Q \in RB_L(m) | S = Q|_{[n]}\}} p_T(Q)
\]
but $p_T(Q)$ is nonzero if and only if $shape(Q) = T$, in which case it is $\frac{1}{|O(T)|}$. 
So the above sum becomes
\[
\pi_n(p_T)(S) = \sum_{\{Q \in RB_L(m) | S = Q|_{[n]}, shape(Q) = T \}} \frac{1}{|O(T)|}
= \frac{c_T(S) }{|O(T)|}.
\]
\end{proof}

\begin{cor}\label{cor:induceddensity}
Let $S' \in RB_U(n)$ and $T \in RB_U(m)$. Then
$\pi_n(p_T)(S')$, which is used to denote the sum of $\pi_n(p_T)(S
)$ over all $S \in O(S')$, is the induced subtree density of $S'$ in $T$. That is, 
$\pi_n(p_T)(S')$ is the ratio of the number of times that $S'$ occurs as a restriction tree of $T$ when $n-m$ of its leaves are marginalized out. 
\end{cor}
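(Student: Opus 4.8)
\emph{Proof strategy.} The plan is to read $\pi_n(p_T)$ as a pushforward measure and then use the symmetric group $S_m$ to convert ``restriction to the leaves labelled $[n]$'' into ``restriction along a uniformly random $n$-element set of leaves of $T$.'' By definition, $\pi_n(p_T)$ is exactly the image of the uniform distribution $p_T$ on $O(T)$ under the restriction map $Q \mapsto Q|_{[n]}$ from $RB_L(m)$ to $RB_L(n)$. Summing over $O(S')$ and invoking the preceding lemma, $\pi_n(p_T)(S') = \tfrac{1}{|O(T)|}\sum_{S \in O(S')} c_T(S)$, and this is precisely the probability that a uniformly random labelling $Q$ of the shape $T$ (i.e. a uniformly random element of $O(T)$) satisfies $\shape(Q|_{[n]}) = S'$.

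Next I would fix any reference labelling $\tilde T \in O(T)$ and note that the map $S_m \to O(T)$, $\sigma \mapsto \sigma \tilde T$, is surjective with all fibers of equal cardinality $|\mathrm{Stab}_{S_m}(\tilde T)|$; hence if $\sigma$ is uniform on $S_m$ then $\sigma\tilde T$ is uniform on $O(T)$. The key combinatorial observation is that $(\sigma\tilde T)|_{[n]}$ is obtained from $\tilde T|_{\sigma^{-1}([n])}$ by relabelling leaves via $\sigma$ and suppressing the same degree-$2$ vertices, and relabelling does not affect the shape, so $\shape\big((\sigma\tilde T)|_{[n]}\big) = \shape\big(\tilde T|_{\sigma^{-1}([n])}\big)$. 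Finally, for $\sigma$ uniform on $S_m$ the set $\sigma^{-1}([n])$ is a uniformly random $n$-element subset of $[m]$, since $\mathbb{P}(\sigma^{-1}([n]) = A) = n!(m-n)!/m! = 1/\binom{m}{n}$ for every such $A$. Combining these three facts gives
\[
\pi_n(p_T)(S') \;=\; \mathbb{P}_{\sigma}\big(\shape((\sigma\tilde T)|_{[n]}) = S'\big) \;=\; \mathbb{P}_{A}\big(\shape(\tilde T|_{A}) = S'\big),
\]
where $A$ ranges uniformly over the $n$-subsets of $[m]$; the last quantity is, by definition, the induced subtree density of $S'$ in $T$ — the fraction of the $\binom{m}{n}$ $n$-leaf subsets of $T$ whose restriction has shape $S'$ — and the identity also shows this density does not depend on the chosen labelling $\tilde T$.

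I expect the main obstacle to be the bookkeeping in that key observation: one must pin down what $(\sigma\tilde T)|_{[n]}$ means and check cleanly that restricting-then-relabelling and relabelling-then-restricting agree up to the action of $\sigma$, so that only the (unlabelled) shape survives on both sides. An alternative that bypasses probabilistic language is a direct double count of the set of pairs $(Q,\sigma)$ with $Q \in O(T)$, $\sigma \in S_m$, and $\shape((\sigma Q)|_{[n]}) = S'$: grouping by the labelled tree $\sigma Q$ gives $m!\cdot \pi_n(p_T)(S')$, while grouping by the subset $\sigma^{-1}([n])$ gives $n!(m-n)!$ times the number of $n$-leaf subsets of $T$ whose restriction has shape $S'$; equating the two counts yields the corollary.
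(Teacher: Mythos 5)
Your proposal is correct, and it starts exactly where the paper does: apply the preceding lemma and sum over $O(S')$ to identify $\pi_n(p_T)(S')$ with $\tfrac{1}{|O(T)|}\sum_{S \in O(S')} c_T(S)$, i.e.\ the fraction of labellings $Q \in O(T)$ with $\shape(Q|_{[n]}) = S'$. Where you diverge is that the paper essentially stops there and asserts in words that this fraction of labellings equals the induced subtree density, meaning the fraction of the $\binom{m}{n}$ $n$-element leaf subsets $A$ of $T$ with $\shape(T|_A) = S'$. Your $S_m$-action argument (a uniform $\sigma$ pushes forward to a uniform element $\sigma\tilde T$ of the orbit $O(T)$ because all fibers of $\sigma \mapsto \sigma\tilde T$ are stabilizer cosets of equal size; $\shape\bigl((\sigma\tilde T)|_{[n]}\bigr) = \shape\bigl(\tilde T|_{\sigma^{-1}([n])}\bigr)$ since relabelling does not change which physical leaves are retained; and $\sigma^{-1}([n])$ is a uniformly random $n$-subset) supplies precisely the translation the paper leaves implicit, and your double-counting alternative is an equally valid, purely combinatorial version of the same step. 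What your route buys is rigor: the passage from ``fraction of labellings whose restriction to the fixed set $[n]$ has shape $S'$'' to ``fraction of leaf subsets whose restriction has shape $S'$'' is the actual content of the corollary, and your write-up establishes it rather than paraphrasing it; it also makes explicit that the answer is independent of the reference labelling $\tilde T$. (Minor point: the ``$n-m$ of its leaves'' in the statement should read $m-n$.)
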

\begin{proof}
From the previous lemma, we know that for any $S \in O(S')$, $\pi_n(p_T)(S) = \frac{c_T(S)}{|O(T)|}$ where $c_T(S) = |\{Q \in RB_L(m) | S = Q|_{[n]}, shape(Q) = T \}|$. Then we have
\[
\pi_n(p_T)(S') = \sum_{S \in O(S')} \frac{c_T(S)}{|O(T)|}
\]
So for each labelling $S$ of $S'$, we are counting which fraction of labellings of $T$ yield $S$ when restricted to $[n]$. As we sum over all labellings of $S$, this gives us the total fraction of times that the shape $S'$ appears as a restriction tree of the shape $T$ when $(n-m)$ of its leaves are marginalized out.
\end{proof}

The following examples elucidates what is meant by induced subtree density and shows how we can explicitly calculate this quantity.

\begin{ex}
We show how to find the projection of one vertex of $EX_5$ down to $EX_4$. 
$EX_4^5$ is the convex hull of the projection of all of the vertices of $EX_5$. 
Begin with the tree shape $T$ pictured in Figure \ref{fig:5leaf}.
We label the leaves of $T$ for the sake of the calculation but it should 
be thought of as an unlabelled tree. We then find the shape of the restriction
tree for the five $4$-subsets of $[5]$. The restriction of $T$ to the leaf sets 
$\{1,2,3,4\}, \{1,2,3,5\}, \{1,2,4,5\}$, gives the shape $Comb_4$ 
 and the restriction to the sets $\{1,3,4,5\}, \{2,3,4,5\}$ gives the shape $Bal_4$, 
 pictured in Figure \ref{fig:2-4leaf}. We let the first coordinate of $EX_4$ be the 
 probability of obtaining $Comb_4$ and the second be the probability of 
 obtaining $Bal_4$. As mentioned above, these probabilities 
 will simply be the number of times each shape appears as a restriction tree over the total number of restriction trees. Thus this vertex of $EX_5$ will give us the distribution $(2/5,3/5)$ in $EX_4$. 
\end{ex}

\begin{figure}
    \centering
    \begin{subfigure}[b]{0.3\linewidth}
        \centering
        \begin{tikzpicture}[scale=.5,thick]
            \draw (2.5,5)--(0,0);
            \draw (2.5,5)--(5,0);
            \draw (.5,1)--(1,0);
            \draw (4.5,1)--(4,0);
            \draw (4,2)--(3,0);
            \draw (0,0) node[below]{$1$};
            \draw (1,0) node[below]{$2$};
            \draw (3,0) node[below]{$3$};
            \draw (4,0) node[below]{$4$};
            \draw (5,0) node[below]{$5$};
        \end{tikzpicture} 
        \caption{A five leaf tree}
        \label{fig:5leaf}
    \end{subfigure}
    \begin{subfigure}[b]{0.3\linewidth}
        \centering
        \begin{tikzpicture} [scale = .5,thick]
            \draw (2.5,3)--(4,0);
            \draw (2.5,3)--(1,0);
            \draw (3,2)--(2,0);
            \draw (3.5,1)--(3,0);
            \draw (2.5,-.5) node[below]{$Comb_4$};
            \draw (7,3)--(5,0);
            \draw (7,3)--(9,0);
            \draw (5.66,1)--(6.33,0);
            \draw (8.33,1)--(7.66,0);
            \draw (7,-.5) node[below]{$Bal_4$};
        \end{tikzpicture}
        \caption{The two tree shapes for four leaf trees}
        \label{fig:2-4leaf}
    \end{subfigure}
    \caption{}
    \label{fig:my_label}
\end{figure}

We have now seen how to compute the vertices of $EX_n^m$ 
explicitly but not every distribution $\pi_n(p_T)$ is a vertex
of  $EX_n^m$.
However, the comb tree always yields a vertex of $EX_n^m$.  

\begin{lemma}\label{lem:combvert}
For all $m \geq n$, let $Comb_m \in RB_U(m)$ be the $m$-leaf comb tree, 
then $p_{Comb_m}$ is a vertex in $EX_n^m$. 
\end{lemma}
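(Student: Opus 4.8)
The plan is to show that $p_{Comb_m} = \pi_n(p_{Comb_m})$ cannot be written as a convex combination of the other projected vertices $\pi_n(p_T)$ for $T \in RB_U(m)$, which by Lemma~\ref{lemma:exnm} is enough to conclude it is a vertex of $EX_n^m$. The natural way to do this is to exhibit a linear functional on $\mathbb{R}^{|RB_U(n)|}$ that is uniquely maximized at $\pi_n(p_{Comb_m})$ among all the $\pi_n(p_T)$. The obvious candidate functional is evaluation at the coordinate indexed by the $n$-leaf comb: by Corollary~\ref{cor:induceddensity}, $\pi_n(p_T)(Comb_n)$ is the induced subtree density of $Comb_n$ inside $T$, i.e.\ the fraction of $n$-element leaf subsets $A$ of $T$ for which $T|_A \cong Comb_n$.

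So the key claim to establish is: \emph{among all $m$-leaf tree shapes $T$, the comb $Comb_m$ is the unique shape maximizing the induced density of $Comb_n$}, and in fact achieves density exactly $1$. First I would observe that $Comb_m|_A \cong Comb_n$ for every $n$-subset $A$ of the leaves of $Comb_m$ — restricting a caterpillar to any subset of its leaves and suppressing degree-two vertices again yields a caterpillar — so $\pi_n(p_{Comb_m})(Comb_n) = 1$, the maximum possible value of a coordinate of a probability vector. Then I would argue that for any non-comb shape $T \in RB_U(m)$ with $m \geq n$, there is at least one $n$-subset $A$ of its leaves with $T|_A \not\cong Comb_n$, so $\pi_n(p_T)(Comb_n) < 1$ strictly. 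Given this, if $p_{Comb_m} = \sum_T \lambda_T \pi_n(p_T)$ were a convex combination with some weight on a shape other than $Comb_m$, evaluating at the $Comb_n$ coordinate would force $1 = \sum_T \lambda_T \pi_n(p_T)(Comb_n) < 1$, a contradiction; hence $p_{Comb_m}$ is a vertex.

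The main obstacle is the strict inequality $\pi_n(p_T)(Comb_n) < 1$ for non-comb $T$, i.e.\ showing every non-caterpillar $m$-leaf binary tree has some $n$-leaf restriction that is not a caterpillar. For $n = m$ this is immediate since $T$ itself is the only restriction and $T \not\cong Comb_m$. For $n < m$, a clean argument is: a rooted binary tree is a caterpillar if and only if it has no ``cherry vs.\ cherry'' configuration, equivalently every internal vertex has at most one child that is itself internal. If $T$ is not a caterpillar it contains two disjoint cherries $\{a,b\}$ and $\{c,d\}$ (two internal vertices each having two leaf children); if $n \geq 4$ we may extend $\{a,b,c,d\}$ to an $n$-subset $A$, and then $T|_A$ still contains these two disjoint cherries, so it is not a caterpillar. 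The small cases $n = 2, 3$ are vacuous since all $2$- and $3$-leaf shapes are combs (so $\pi_n(p_T)(Comb_n) = 1$ always, and the functional is constant) — but in those cases the statement still needs handling, which one does by noting $|RB_U(2)| = |RB_U(3)| = 1$, so $EX_n^m$ is a single point and the claim is trivial. I would organize the writeup as: (1) reduce to Lemma~\ref{lemma:exnm} and the coordinate functional; (2) compute $\pi_n(p_{Comb_m})(Comb_n) = 1$; (3) the disjoint-cherry argument for strictness; (4) conclude.
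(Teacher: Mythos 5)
Your proof is correct, but it takes a longer route than the paper. The paper's argument is two sentences: every restriction of a comb is a comb, so $\pi_n(p_{Comb_m}) = p_{Comb_n}$; since $p_{Comb_n}$ is a vertex of the ambient simplex $EX_n$ and $EX_n^m$ is a convex subset of $EX_n$ containing it, it is automatically a vertex of $EX_n^m$ (an extreme point of a convex set remains extreme in any convex subset to which it belongs). Your separating-functional argument via the $Comb_n$-coordinate is sound, and the disjoint-cherry lemma you invoke for strictness is true (a non-caterpillar has an internal vertex with two internal children, hence two vertex-disjoint cherries, which survive restriction). However, that strictness step is not actually needed: once you know $\pi_n(p_{Comb_m})(Comb_n)=1$, the point sits at a vertex of the probability simplex, and any other generating point with the same coordinate value would have to \emph{equal} $p_{Comb_n}$ as a probability vector, so the face cut out by your functional is a single point regardless. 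What your extra work buys is a strictly stronger statement the paper does not prove, namely that $Comb_m$ is the \emph{unique} shape in $RB_U(m)$ whose projection achieves $Comb_n$-density $1$; what the paper's route buys is brevity and no case analysis for small $n$.
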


\begin{proof}
The comb tree has only smaller comb trees as restriction trees, 
so the image of the comb distribution on $m$ leaves under the marginalization map will 
be the comb distribution on $n$ leaves.
Since $p_{Comb_n}$ is a vertex of $EX_n$ and $EX_n^m$ is a subset of $EX_n$,
then $p_{Comb_n}$ is also a vertex of $EX_n^m$.
\end{proof}


\section{Examples of Exchangeable and Sampling Consistent distributions}

In this section we discuss some of the well-known exchangeable and 
sampling consistent families of distributions 
particularly, the Markov branching models.  We also introduce a new family of exchangeable sampling
consistent tree distributions, namely the multinomial family.

\subsection{Markov Branching Models}
\label{sec:splitModels}
An important example of sampling consistent and exchangeable distributions 
are the families of Markov branching models which can be constructed in the 
following way as first introduced in \cite{al93} by Aldous. 

Suppose that for every integer $n \geq 2 $, we have a probability distribution on $\{1,2, \ldots, n-1\}$
$q_n = (q_n(i) : i = 1,2, \ldots n-1)$ which satisfies $q_n(i) = q_n(n-i)$.
Using this family of distributions we can define a probability distribution 
on $RB_U(n)$ by taking the probability that $i$ leaves fall on the left 
of the root-split and $n-i$ leaves fall on the right of the root-split to be 
$q_n(i)$ with each choice of $i$ labels to fall on the left having the same probability. 
Repeating recursively in each branch will yield the probability of a rooted binary tree. 
Aldous called these models Markov branching models. 

Haas et al. classified the sampling consistent Markov branching models on rooted binary trees in \cite{winkel08}. They show that every sampling consistent Markov branching model, defined by the splitting rules $q_n$, $n \geq 2$, has an integral representation of the form
\begin{equation}\label{eq:qni}
q_n(i) = a_n^{-1} \bigg(\binom{n}{i}\int_{0}^{1}x^i (1-x)^{n-i} \nu(dx) + nc1_{i=1} \bigg)
\end{equation}
where $c \geq 0$, $\nu$ is a symmetric measure on $(0,1)$ such that $\int_{0}^{1}x (1-x) \nu(dx) < \infty$, and $a_n$ is a normalization constant. $c1_{i=1}$ accounts for the comb distribution. 
A subclass of these models are those where the measure $\nu$ in equation (\ref{eq:qni}) has the form
$\nu(dx) = f(x)dx$ for a probability density function $f$ on $(0,1)$ that is symmetric on the interval (i.e.~$f(x) = f(1-x)$) and where $c=0$. 
These Markov branching models can be thought of as uniformly choosing $n$ points in the interval $(0,1)$ at random and then splitting the interval with respect to the density $f$. 
Repeating the splitting process recursively in each subinterval until 
each of the original $n$ points is contained in its own subinterval gives a tree shape. This process is pictured in Figure 6 in \cite{al93}.

One particularly important family of Markov branching distributions is the 
beta-splitting model. 
It is a Markov branching model that belongs to the subclass mentioned above where the function $f$ in the above description has the form
\[
f(x) = \frac{\Gamma(2\beta +2)}{\Gamma^{2}(\beta+1)}x^{\beta}(1-x)^{\beta}
\]
for  $-1 < \beta < \infty$.  
For the beta-splitting model we can calculate the values $q_n(i)$ explicitly in terms of $\beta$. 
By plugging in the beta-splitting density function $f$ into (\ref{eq:qni})
for $q_n(i)$  we get the following formulas: 
\begin{equation}\label{eq:beta}
q_n(i) = a_n^{-1} \binom{n}{i} \frac{\Gamma(\beta + i + 1)\Gamma(\beta + n -i +1)\Gamma(2\beta+2)}{\Gamma(\beta+n+2)\Gamma^{2}(\beta+1)}
\end{equation}
for $-1 < \beta < \infty$.  Note that (\ref{eq:beta}) gives a valid probability distribution
when $-2 < \beta \leq -1$ and so it is natural to extend the beta-splitting model
to those values of $\beta$, although the density is not well-defined in that case.
As $\beta$ approaches $-2$ the beta-splitting model approaches the distribution
which puts all probability on the comb tree, so we also include $\beta = -2$  in the beta
splitting model as the comb distribution. 

An important note here is that for the beta-splitting model 
each $q_n(i)$ is actually a rational function in $\beta$. 
Using properties of the gamma function one can see that the above formula simplifies to
\[
q_n(i) = \frac{\binom{n}{i}(i+\beta)_i (n-i+\beta)_{n-i}}{(n+2\beta+1)_n - 2(n+\beta)_n}
\]
Since each $q_n(i)$ is a rational function in $\beta$, we can see that the probability 
of obtaining a certain tree shape is a rational function in $\beta$ as well 
because the probability of obtaining that tree shape under the beta-splitting 
model is simply the product of the probability of all of the splits in the tree. 
\begin{ex}
Let $Comb_4$ and $Bal_4$ be the trees pictured in Figure \ref{fig:2-4leaf}. Then the probabilities of obtaining them under the beta-splitting model are
\begin{align*}
    p(Comb_4) & = 2q_4(1)=\frac{12+4\beta}{18+7\beta} \\
    p(Bal_4) & = q_4(2)=\frac{6 + 3\beta}{18+7\beta}
\end{align*}

\end{ex}
This model also has a nice characterization among all of the sampling consistent Markov branching models. In \cite{gibbsTrees}, Mccullagh, Pitman, and Winkel show that the beta-splitting models are the only sampling consistent Markov branching models whose splitting rules admit a particular factorization.  

We are interested in examining how the sampling consistent Markov branching models and in particular the beta-splitting model fits inside inside of $EX_n$ as a whole.  These distributions are infinitely sampling consistent
and so lie in $EX_n^\infty$ as well. 
A priori, it might seem that to determine the probability of a tree 
shape with $n$ leaves under a Markov branching model that one would need to have 
not only the distribution $q_n$ but also distributions $q_k$ where $2 \leq k \leq n-1$. 
This is actually not the case for any sampling consistent Markov branching model though. Ford showed in Proposition 41 of \cite{ford06} that if $(q_k | 2 \leq k \leq n)$ are the splitting rules for a distribution in $EX_n^\infty$, then in fact it must be that
\begin{equation}\label{eq:qnrecurrence}
q_{n-1}(i) = \frac{(n-i)q_n(i) + (i+1)q_n(i+1)}{n-2q_n(1)}
\end{equation}
This implies that all that is needed to define a distribution in $EX_n^\infty$ is the first splitting rule $q_n$ which gives the following corollary. 

\begin{cor}
The dimension of the set of all sampling consistent Markov branching models in $EX_n$ is at most $\lceil {\frac{n-1}{2}} \rceil -1$
\end{cor}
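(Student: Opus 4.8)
The plan is to exhibit an explicit surjective affine map from a space of dimension $\lceil \frac{n-1}{2}\rceil - 1$ onto the set of sampling consistent Markov branching models inside $EX_n$, which immediately bounds the dimension of the image. The key observation, already supplied by equation~(\ref{eq:qnrecurrence}), is that for a distribution in $EX_n^\infty$ the splitting rule $q_n$ determines all the lower splitting rules $q_k$ for $2 \leq k \leq n-1$ via the recurrence. Since the probability of any tree shape with at most $n$ leaves under a Markov branching model is a fixed polynomial function of the splitting rules $q_k$, $2 \leq k \leq n$ (it is the product of the relevant split probabilities), composing this polynomial map with the recurrence shows that the entire distribution on $RB_U(n)$ is a function of $q_n$ alone.

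So first I would describe the parameter space of allowable $q_n$: the symmetry constraint $q_n(i) = q_n(n-i)$ together with $\sum_{i=1}^{n-1} q_n(i) = 1$ and $q_n(i) \geq 0$. The symmetry condition means $q_n$ is determined by the values $q_n(1), q_n(2), \dots, q_n(\lfloor n/2 \rfloor)$; the normalization $\sum q_n(i) = 1$ then removes one further degree of freedom. Counting: there are $\lfloor n/2 \rfloor$ free values, minus $1$ for normalization, giving $\lfloor n/2 \rfloor - 1 = \lceil \frac{n-1}{2}\rceil - 1$ when $n$ is odd, and one needs to check the parity bookkeeping carefully when $n$ is even (there the middle index $i = n/2$ is its own mirror, so it contributes only one value, and again we land on $\lceil \frac{n-1}{2}\rceil - 1$). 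This gives an affine parametrization of the valid $q_n$ by a polytope of dimension $\lceil \frac{n-1}{2}\rceil - 1$.

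Next I would assemble the map. Define $\Phi$ sending a valid $q_n$ to the vector of tree-shape probabilities in $\mathbb{R}^{|RB_U(n)|}$: compute $q_{n-1}, q_{n-2}, \dots, q_2$ from $q_n$ using~(\ref{eq:qnrecurrence}), then form the Markov branching distribution on $RB_U(n)$ from the full family $(q_2, \dots, q_n)$. This $\Phi$ is a composition of a rational map (the recurrence) with a polynomial map (building the tree probabilities), hence in particular its image is contained in an affine subspace whose dimension is at most the dimension of the domain, namely $\lceil \frac{n-1}{2}\rceil - 1$. By construction every sampling consistent Markov branching distribution in $EX_n$ arises in the image of $\Phi$, since such a distribution has splitting rules satisfying the recurrence. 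Therefore the dimension of the set of all sampling consistent Markov branching models in $EX_n$ is at most $\lceil \frac{n-1}{2}\rceil - 1$.

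The one delicate point — the main obstacle — is handling the denominator $n - 2q_n(1)$ in the recurrence~(\ref{eq:qnrecurrence}): one must argue that this quantity is nonzero (equivalently $q_n(1) \neq n/2$, which holds since $q_n(1) \leq 1 < n/2$ for $n \geq 2$, except one should double-check the boundary $n=2$), so that the rational map is well-defined on the relevant parameter region, and that the recurrence genuinely outputs valid probability distributions $q_{n-1}, \dots, q_2$ on the appropriate index sets. Since the corollary only claims an upper bound on the dimension, I do not need the map to be injective or its image to be exactly the claimed dimension; it suffices that the image of a $\lceil \frac{n-1}{2}\rceil - 1$-dimensional set under a (piecewise) algebraic map has dimension at most $\lceil \frac{n-1}{2}\rceil - 1$, which is standard.
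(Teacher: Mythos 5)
Your proposal is correct and follows essentially the same route as the paper: invoke the recurrence~(\ref{eq:qnrecurrence}) to reduce the model to the single splitting rule $q_n$, then count the free parameters of $q_n$ under the symmetry and normalization constraints to get $\lceil \frac{n-1}{2}\rceil - 1$. The extra care you take with the denominator $n - 2q_n(1)$ and the parity bookkeeping (note $\lfloor n/2\rfloor = \lceil \frac{n-1}{2}\rceil$ for both parities, so no case split is needed) is fine but not required for the upper bound.
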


\begin{proof}
As explained above, a Markov branching model is completely determined by the distribution 
$q_n = (q_n(i) : i = 1,2, \ldots n-1)$ which determines all of the distributions 
$q_k = (q_k(i) : i = 1,2, \ldots k-1)$ where $ 2 \leq k \leq n-2$. 
Since $q_n$ must be symmetric we immediately get that the values 
$q_1,q_2, \ldots, q_{\lceil {\frac{n-1}{2}} \rceil}$ determine all of $q_n$. 
Also since $q_n$ must be a distribution we lose one of these as a free parameter, thus 
the dimension of the set of sampling consistent Markov branching models is bounded above by $(\lceil {\frac{n-1}{2}} \rceil -1)$.
\end{proof}

Note that when $n = 4$, the 
space of sampling consistent Markov branching models has dimension $1$. We will see in Section \ref{sec:ex4}
that the set of beta-splitting
models is equal to the set of sampling consistent Markov branching models in this case.


\subsection{Multinomial model}
\label{sec:multimodel}
The multinomial model is a model that associated to each tree shape
 $T \in RB_U(m)$ for any $ m \geq 2$
 a family of probability distributions on $RB_L(n)$ for each $n$.
We will often extend the model to allow to use extended trees with
an additional leaf added to the root.
We associate to every edge, $e$, in $T$ a parameter $t_e \geq 0$. 
This gives us a vector of parameters $t = (t_e | e \in E(T))$ of length $2m-1$,
and we assume that $\sum_e t_e = 1$, so that these parameters give a probability
distribution on the edges of $T$.
We will now use this probability distribution to define a set of distributions on 
$RB_U(n)$ for any $n \geq 2$. Note that $n$ and $m$ do not have to be related to each other.

Using the distribution $t$, we draw a multiset $A$ of edges from the tree $T$,
where edge $e$ occurs with probability $t_e$.  
There is a natural way to take the tree $\tilde{T}$ and a 
multiset $A$ of size $n$ on the set of parameters and construct a 
new tree which we will call $T_A \in RB_U(n)$. Each time that an edge $e$ 
appears in $A$, we add a new leaf to the edge $e$, which will give us a new tree 
with an undetermined number of leaves. We then simply take $T_A$ to be the induced 
subtree on only the leaves that come from $A$.   Hence, the multinomial model on the
tree $T$ gives a way to produce random trees with an underlying skeleton that is the tree $T$.
For large $n$, the resulting random trees look like $T$ with many extra leaves added.

The multinomial probability of observing a particular multiset of edges $A$
is the monomial
\[
p_A = \binom{n}{m_A}
\prod_{e \in T} t_e^{m_A(e)}
\]
where $m_A(e)$ denotes the number of times that $e$ appears in the multiset $A$,
and $m_A$ is the resulting vector.  

Letting $M_n^T$ be the set of all $n$ element multisets of edges of $T$, 
we can calculate the probability of observing any particular tree shape $S$ by 
\[
p_{T,t}(S) = \sum_{\substack{A \in M_n^T \\ T_A = S}} p_A.
\]

\begin{ex}
Consider the tree $T$  from Figure \ref{fig:tripod}
 with edge parameters $(t_1,t_2,t_3)$. To calculate the probability of the tree, 
 $Bal_5$, in Figure \ref{fig:balance} we use the formula
\[
p_{T,t}(Bal_5) = \sum_{\substack{A \in M_5^T \\ T_A = S}} p_A.
\]
The only multisets that satisfy this condition are the sets 
$A_1 = \{2,2,2,3,3\}$ and $A_2 =  \{2,2,3,3,3\}$. This is because if $1$ appears in a multiset 
$A$ any positive number of times, the tree $T_A$ will have a single leaf on one side of the root and 
four leaves on the other side, regardless of what other parameters appear in the set. 
So $A_1$ and $A_2$ are the only elements of $M_5^T$ that we sum over so
\[
p_{T,t}(Bal_5) =\binom{5}{3,2}t_2^3 t_3^2 + \binom{5}{2,3}t_2^2 t_3^3
\]
\end{ex}

\begin{figure}
    \centering
    \begin{subfigure}[b]{0.3\linewidth}
        \centering
        \begin{tikzpicture} [scale = .5,thick]
            \draw [thick] (2,2)--(0,0);
            \draw [thick] (2,2)--(4,0);
            \draw (0,0) node[below]{};
            \draw (4,0) node[below]{};
        \end{tikzpicture}
        \caption{$T$}
        \label{}
    \end{subfigure}
     \begin{subfigure}[b]{0.3\linewidth}
        \centering
        \begin{tikzpicture} [scale = .5,thick]
            \draw [thick] (2,2)--(0,4);
            \draw [thick] (2,2)--(0,0);
            \draw [thick] (2,2)--(4,0);
            \draw (0,0) node[below]{$t_2$};
            \draw (4,0) node[below]{$t_3$};
            \draw (0,4) node[below]{$t_1$};
        \end{tikzpicture}
        \caption{$\tilde{T}$}
        \label{fig:tripod}
    \end{subfigure}
    \begin{subfigure}[b]{0.3\linewidth}
        \centering
        \begin{tikzpicture} [scale = .5,thick]
            \draw (14.5,5)--(12,0);
            \draw (14.5,5)--(17,0);
            \draw (12.5,1)--(13,0);
            \draw (16.5,1)--(16,0);
            \draw (16,2)--(15,0);
        \end{tikzpicture}
        \caption{$Bal_5$}
        \label{fig:balance}
    \end{subfigure}
    \caption{}
    \label{fig:my_label}
\end{figure}

The multinomial model gives a family of distributions as we let the parameter vector $t$ 
range over the entire simplex. Equivalently, the model can be described 
as the image of the simplex under the polynomial map
\[
p_T: \Delta_{|E(T)| - 1} \to EX_n^\infty
\]
where the coordinate corresponding to $S \in EX_n^\infty$ 
has value $p_{T,t}(S)$ for $t \in \Delta_{2m-2}$. Since $\Delta_{2m-2}$ 
is a semialgebraic set and $p_T$ is a polynomial map, 
the multinomial model is also a semialgebraic set. 

It also holds that if we take any tree $T \in RB_U(m)$, and any subtree $T' \in RB_U(m')$ of $T$, 
then we have that $Im(p_{T'}) \subseteq Im(p_T)$. This is because if the 
parameters corresponding to edges that appear in $T$ but not in $T'$ are set to 
$0$ in $p_T$, the map will simply become $p_{T'}$. Setting these parameters to $0$ 
just corresponds to restricting $p_T$ to a subset of the simplex and thus we get the image containment. 

A last interesting note is that this model is perhaps similar in spirit to the $W$-random graphs when $W$ is a graphon obtained from a finite graph $G$ as described in \cite{lovasz12}. The construction begins with a finite graph $G$ and uses it to define a distribution on graphs with $k$ vertices similarly to how we begin with a tree $T$ and define a distribution on trees with $k$ leaves.

We end this section with Figure \ref{fig:ex5withmultinomial},  
which shows both the beta-splitting model and the multinomial model inside $EX_5$.   
In the next section we will discuss the exchangeable and sampling 
consistent distributions on four leaf trees and how they relate 
to the models discussed in this section. 

\begin{figure}
\centering
\includegraphics[scale=.25]{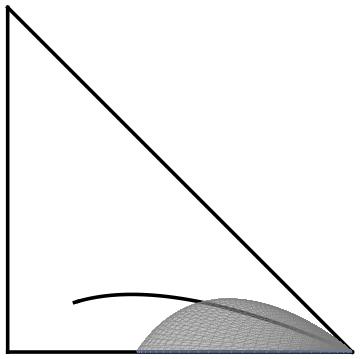}
\caption{This is a projection onto the first two coordinates of the simplex $EX_5$. The beta-splitting model on $RB_U(5)$ is pictured in black and the multinomial model on the two leaf tree is pictured in gray.} 
\label{fig:ex5withmultinomial}
\end{figure}


\section{Distributions in $EX_4^\infty$}
\label{sec:ex4}

In this section we classify all of the distributions in $EX_4^\infty$.
In particular, we show that $EX_4^\infty$ is equal to the beta-splitting model.
 
First we note that since there are only two distinct tree shapes with 
four leaves (see Figure \ref{}), the set of exchangeable distributions 
is just a 1-dimensional simplex $\Delta_1$ in $\rr^2 $.
We take coordinates $(p_1,p_2)$ on $\rr^2$ and let the first coordinate correspond 
to $Comb_4$ and the second coordinate to $Bal_4$. The subset of distributions 
that are also sampling consistent must be some line segment within the simplex. 
We know from Lemma \ref{lem:combvert} that the comb distribution, 
which is $(1,0)$ in these coordinates, 
is a vertex in $EX_4^\infty$. If we can bound the probability of 
obtaining $Bal_4$ then we will have a complete characterization of 
all distributions in $EX_{4}^{\infty}$.  Theorem 14 in \cite{co18}
will be the main tool to achieve this.
 
\begin{thm}\cite[Thm 14]{co18}  \label{thm:Coronado}
The most balanced tree in $RB_U(n)$ has the complete symmetric 
tree on four leaves appear more frequently as a subtree than any other tree in $RB_U(n)$. 
\end{thm}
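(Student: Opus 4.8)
The plan is to convert the statement into a finite extremal problem about tree shapes and prove it by induction on the number of leaves $n$.

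By Corollary~\ref{cor:induceddensity}, for $T\in RB_U(n)$ the frequency with which $Bal_4$ occurs as a restriction subtree of $T$ equals $N(T)/\binom{n}{4}$, where $N(T)$ is the number of $4$-element subsets $S$ of the leaves with $T|_S=Bal_4$. The first step is a closed form for $N(T)$. Classify such an $S$ by the vertex $v=\mathrm{lca}_T(S)$ and let $a_v,b_v$ be the numbers of leaves of $T$ below the two children of $v$; then $T|_S$ is the tree whose root has, as its two maximal subtrees, the restrictions of $T$ to the leaves of $S$ below each child of $v$. Since all four leaves of $S$ cannot lie below one child of $v$, the split of $S$ across the children is $2$--$2$ (in which case both maximal subtrees of $T|_S$ are cherries and $T|_S=Bal_4$) or $1$--$3$/$3$--$1$ (in which case $T|_S=Comb_4$); counting the $2$--$2$ subsets with a given lca gives
\[
N(T)\;=\;\sum_{v}\binom{a_v}{2}\binom{b_v}{2},
\]
the sum over internal vertices $v$ of $T$. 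The theorem is thus equivalent to the assertion that $N$ is maximized over $RB_U(n)$, uniquely, by the maximally balanced tree $B_n$ (the tree defined by letting $B_1$ be a single leaf and giving $B_m$ the root subtrees $B_{\lceil m/2\rceil}$ and $B_{\lfloor m/2\rfloor}$).

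I would prove this by strong induction on $n$, the cases $n\le 4$ being immediate. Assume the claim for all smaller leaf counts, let $T$ maximize $N$ among $n$-leaf trees, and say its root has subtrees $T_L,T_R$ on $k$ and $n-k$ leaves with $1\le k\le\lfloor n/2\rfloor$. Since the internal vertices of $T$ are the root together with those of its two subtrees, and $a_v=k,b_v=n-k$ at the root, we have $N(T)=\binom{k}{2}\binom{n-k}{2}+N(T_L)+N(T_R)$, which by the inductive hypothesis is at most $g(k):=\binom{k}{2}\binom{n-k}{2}+N(B_k)+N(B_{n-k})$, with equality iff $T_L=B_k$ and $T_R=B_{n-k}$. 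As $g(k)$ is attained by the legitimate $n$-leaf tree with subtrees $B_k,B_{n-k}$, maximality gives $N(T)=g(k)$ with those subtrees. It therefore remains to prove
\[
g(k)\;\le\;g(\lfloor n/2\rfloor)\qquad(1\le k\le\lfloor n/2\rfloor),
\]
with equality only at $k=\lfloor n/2\rfloor$; this bounds $N(T)$ and forces the maximizer to have balanced root split and hence (its subtrees being the relevant $B$'s) to equal $B_n$.

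The displayed split inequality is the technical heart and the step I expect to be the main obstacle. One cannot simply show $g$ increases on $[1,\lfloor n/2\rfloor]$: the split term does increase, with $\binom{k+1}{2}\binom{n-k-1}{2}-\binom{k}{2}\binom{n-k}{2}=\tfrac12\,k(n-k-1)(n-2k-1)$, but $N(B_k)+N(B_{n-k})$ is \emph{minimized} at the balanced split and can grow as $k$ shrinks, so $g$ genuinely dips for small $k$ (e.g.\ $g(1)>g(2)$ when $n=16$), and as $n\to\infty$ the two effects nearly cancel. To push through I would first obtain, by an auxiliary induction on the recursion $N(B_m)=\binom{\lceil m/2\rceil}{2}\binom{\lfloor m/2\rfloor}{2}+N(B_{\lceil m/2\rceil})+N(B_{\lfloor m/2\rfloor})$, tight enough control on $N(B_m)$ and its first differences --- namely the convexity of $m\mapsto N(B_m)$ together with $N(B_m)=\tfrac{3}{7}\binom{m}{4}+O(m^{3})$ (so that $N(B_m)-N(B_{m-1})=\Theta(m^{3})$), ideally with explicit constants. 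Then I would split the range at a suitable threshold $\varepsilon n$: for $k\ge\varepsilon n$, prove $g(k+1)>g(k)$ directly, the gain $\tfrac12 k(n-k-1)(n-2k-1)$ in the split term dominating $\bigl(N(B_{n-k})-N(B_{n-k-1})\bigr)-\bigl(N(B_{k+1})-N(B_{k})\bigr)$ (a nonnegative quantity, by convexity and $n-k-1\ge k$); for $k<\varepsilon n$, write $g(\lfloor n/2\rfloor)-g(k)=\bigl(N(B_n)-N(B_{n-k})\bigr)-\binom{k}{2}\binom{n-k}{2}-N(B_k)$ and observe that $N(B_n)-N(B_{n-k})$ is a telescoping sum of $k$ first differences each of order $n^{3}$, which for $k<\varepsilon n$ outweighs $\binom{k}{2}\binom{n-k}{2}+N(B_k)$. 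Uniqueness follows by tracking strictness: equality in the split inequality forces $k=\lfloor n/2\rfloor$, and then the inductive hypothesis forces both subtrees to be maximally balanced, so $T=B_n$. The genuinely delicate part is quantitative --- the constant $3/7$ must be approached fast enough for both regimes to close, and the finitely many small $n$ not covered by the asymptotics have to be checked directly.
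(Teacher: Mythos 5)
You should first be aware that the paper does not prove this statement at all: it is imported verbatim as Theorem 14 of \cite{co18}, and the closest thing to an in-house proof is the five-leaf analogue developed in Lemmas \ref{lem:switch} and \ref{lem:switch_leaf} and Theorem \ref{thm:minc5}. That method --- and the method of \cite{co18} itself --- is a \emph{local exchange} argument: if some internal node of $T$ is not balanced, a single transposition of subtrees strictly changes the relevant quartet count, via an exact closed-form identity for the difference (the degree-$4$ analogue of the displayed formula for $c_5(T_z)-c_5(T'_z)$). No asymptotics, no control of the extremal value, and no induction on the root split are needed; one simply concludes that an extremal tree must be balanced at every internal node, and that tree is unique. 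Your route is genuinely different: a global induction on the root split, reducing everything to the inequality $g(k)\le g(\lfloor n/2\rfloor)$.

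Your reduction itself is sound: the formula $N(T)=\sum_v\binom{a_v}{2}\binom{b_v}{2}$ is correct, the induction correctly forces the root subtrees of a maximizer to be $B_k$ and $B_{n-k}$, and the algebra for the increment of the split term checks out. The gap is that the inequality $g(k)\le g(\lfloor n/2\rfloor)$, with strictness off the balanced split, \emph{is} the theorem (modulo the inductive hypothesis), and you do not prove it --- you propose a two-regime asymptotic program and candidly defer the quantitative work. The difficulty is real and worse than an ``$\varepsilon$'': comparing the split-term gain $\tfrac12 k(n-k-1)(n-2k-1)$ against $\Delta(n-k)-\Delta(k+1)$ with $N(B_m)\sim\tfrac{3}{7}\binom{m}{4}$ shows that $g(k+1)>g(k)$ fails asymptotically for all $k/n$ below $\tfrac12-\tfrac{\sqrt 2}{4}\approx 0.146$, so the monotone regime only starts at a specific nontrivial constant, the direct-comparison regime must reach all the way up to it, the leading term of $g(\lfloor n/2\rfloor)-g(k)$ degenerates to second order at $k=n/2$, and all error terms plus finitely many small $n$ must be handled with explicit constants. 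None of this is carried out, so as written the proposal is a plausible plan with its hardest step missing. To complete a proof you should either execute that program in full (including proving convexity of $m\mapsto N(B_m)$ and two-sided bounds $N(B_m)=\tfrac{3}{7}\binom{m}{4}+O(m^3)$ with constants) or, more economically, adopt the exchange argument used for the five-leaf case in this paper.
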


By the \emph{most balanced tree} in $RB_U(n)$, 
we mean the unique tree shape in $RB_U(n)$ that has the
property that for any internal vertex of the tree, the number of leaves on the left and
right subtrees below that differ by at most one.  

\begin{thm}\label{thm:4leafbeta}
The four leaf beta-splitting model equals the set of all 
exchangeable and sampling consistent distributions on $RB_U(4)$.
\end{thm}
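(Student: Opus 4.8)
The plan is to work in the coordinates of Lemma~\ref{lemma:exnsimplex}: $EX_4$ is the $1$-dimensional simplex $\Delta_1$ with coordinates $(p_1,p_2)$, where $p_1$ is the probability of $Comb_4$ and $p_2$ the probability of $Bal_4$, so a distribution in $EX_4$ is pinned down by the single number $p_2\in[0,1]$. I will show that the set of $p_2$-values realized by distributions in $EX_4^\infty$ is exactly $[0,3/7]$, and that this is precisely the interval swept out by the four-leaf beta-splitting model. (I read ``beta-splitting model'' as including its $\beta\to\infty$ limit; this is needed because $3/7$ will turn out to lie in $EX_4^\infty$ while $f(\beta):=\tfrac{6+3\beta}{18+7\beta}<3/7$ for every finite $\beta$.)

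First I would establish ``beta-splitting $\subseteq EX_4^\infty$''. Every beta-splitting distribution, being an infinitely sampling consistent and exchangeable Markov branching model, lies in $EX_4^\infty$. From the Example in Section~\ref{sec:splitModels}, its $Bal_4$-probability at parameter $\beta$ is $f(\beta)=q_4(2)=\tfrac{6+3\beta}{18+7\beta}$, with $f(-2)=0$; on $[-2,\infty)$ the denominator is positive, $f'(\beta)=12/(18+7\beta)^2>0$, and $f(\beta)\to 3/7$, so $f$ maps $[-2,\infty)$ continuously and strictly monotonically onto $[0,3/7)$. The $\beta\to\infty$ limit is the Markov branching model with splitting rule $q_n(i)=\binom{n}{i}/(2^n-2)$; using $(n-i)\binom{n}{i}=n\binom{n-1}{i}=(i+1)\binom{n}{i+1}$ one checks directly that these $q_n$ satisfy Ford's recurrence~(\ref{eq:qnrecurrence}), so this ``maximally balanced'' model is sampling consistent, lies in $EX_4^\infty$, and has $p_2=q_4(2)=6/14=3/7$. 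Hence the $p_2$-values of the (closed) four-leaf beta-splitting model are exactly $[0,3/7]$, and these all lie in $EX_4^\infty$.

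For the reverse inclusion I would bound $p_2$ from above on $EX_4^\infty$. By definition $EX_4^\infty\subseteq EX_4^m$ for every $m$, and by Lemma~\ref{lemma:exnm}, $EX_4^m=\conv\{\pi_4(p_T):T\in RB_U(m)\}$; hence the largest $p_2$-value attained on $EX_4^\infty$ is at most $\max_{T\in RB_U(m)}\pi_4(p_T)(Bal_4)$, which by Corollary~\ref{cor:induceddensity} is the largest induced $Bal_4$-density among $m$-leaf shapes, and by Theorem~\ref{thm:Coronado} this maximum is attained at the most balanced tree $B_m$. Taking $m=2^k$, so $B_m$ is the complete binary tree of depth $k$, I would write a recursion for $a_k:=\pi_4(p_{B_{2^k}})(Bal_4)$ by conditioning on how the four sampled leaves split between the two depth-$(k-1)$ subtrees at the root: a $(1,3)$ split forces the induced shape to be $Comb_4$, a $(2,2)$ split forces $Bal_4$ (two leaves on one side always restrict to a cherry), and the $(0,4)$ and $(4,0)$ cases recurse into a depth-$(k-1)$ complete binary tree. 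This gives $a_k=\alpha_k\,a_{k-1}+\gamma_k$, where $\alpha_k$ (the probability of the last two cases) and $\gamma_k$ (the probability of a $(2,2)$ split) are sums of ratios of binomial coefficients tending to $\tfrac{1}{8}$ and $\tfrac{3}{8}$; hence $a_k\to\gamma_\infty/(1-\alpha_\infty)=(3/8)/(7/8)=3/7$. Since the top $p_2$-value of $EX_4^\infty$ is $\le a_k$ for every $k$, it is $\le 3/7$. Combined with the previous paragraph, this shows the set of $p_2$-values of $EX_4^\infty$ is exactly $[0,3/7]$, so $EX_4^\infty$ equals the four-leaf beta-splitting model.

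The step I expect to be the main obstacle is the limit $a_k\to 3/7$: one must check cleanly that a $(1,3)$ root split of the complete binary tree always induces $Comb_4$ and a $(2,2)$ split always induces $Bal_4$, that the ``all four leaves on one side'' case really is a scaled-down copy of the same problem, and that $\alpha_k$ and $\gamma_k$ have the claimed limits — the last being the routine asymptotic evaluation of $\binom{N/2}{i}\binom{N/2}{4-i}/\binom{N}{4}$ as $N=2^k\to\infty$, which is where the constant $3/7$ is actually produced.
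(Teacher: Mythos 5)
Your proposal is correct and follows essentially the same route as the paper: both directions hinge on Theorem~\ref{thm:Coronado} to reduce the upper bound on the $Bal_4$-density to the complete binary tree on $2^k$ leaves, a root-split recursion whose limit is $3/7$, and the computation that $q_4(2)\to 3/7$ as $\beta\to\infty$. The only cosmetic differences are that the paper solves the recurrence for $m_{2,2^k}$ exactly (obtaining $\tfrac{3\cdot 2^k-5}{7\cdot 2^k-21}$) rather than passing to the limit of the recursion coefficients, and invokes Lemma~\ref{lem:monotone} to justify restricting to the subsequence $n=2^k$, whereas you only need that the upper bound holds along that subsequence.
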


\begin{proof}
Note that  $EX_4^n$ only has two vertices since it is a line segment. 
The comb distribution $(1,0)$ is always a vertex in $EX_4^n$,
by Lemma \ref{lem:combvert}.  The other vertex will be the 
projection of 
the vertex of $EX_n$ that places the most mass on $Bal_4$. 
The projection of a vertex $p_T \in EX_n$, is 
$(p_1,p_2) = \frac{1}{\binom{n}{4}}(m_1,m_2)$ where $m_1$ is the number of $4$ element subsets
$S \subset [n]$ such that $T|_{S} = Comb_4$ and $m_2$ is the number of $4$ element subsets
$S \subset [n]$ such that $T|_{S} = Bal_4$.  By Theorem \ref{thm:Coronado} 
we can restrict to the most balanced tree in $RB_U(n)$.   
We will use $m_{2,n}$ to denote this highest value of $m_2$ that we get 
from the most balanced tree in $RB_U(n)$. 

The beta-splitting model on $RB_U(4)$, on the other hand, is the line segment from 
$(1,0)$ to $(\frac{4}{7},\frac{3}{7})$. 
Indeed, under the beta splitting model, the probability of $Bal_4$ is just
\begin{eqnarray*}
q_4(2)  & =  &   \frac{\binom{4}{2} (\beta + 2)_2^2  }{ (2 \beta + 5)_4 - 2( \beta + 4)_4 }  \\
      &  =  &   \frac{6  \beta^4 + O(\beta^3) }  {  14 \beta^4 + O(\beta^3)}.
\end{eqnarray*}
As $\beta \rightarrow \infty$, this converges to $\tfrac{3}{7}$.
So if we can 
show that $\lim_{n \rightarrow} \tfrac{m_{2,n}}{\binom{n}{4}}  = \tfrac{3}{7}$ then we will be done. 

To prove that $\lim_{n \rightarrow} \tfrac{m_{2,n}}{\binom{n}{4}}  = \tfrac{3}{7}$, 
we can restrict to the subsequence of values $n = 2^k$, since
Lemma \ref{lem:monotone} implies that $\tfrac{m_{2,n}}{\binom{n}{4}}$ is a monotone 
decreasing sequence.  
This subsequence is easier to deal with since $m_{2,2^n}$ counts the number of 
$4$-subsets, $S \subset [2^n]$ of the leaves of the complete symmetric tree 
$T_{2^n}$ in $RB_U(2^n)$ such that $T_{2^n}|_{S} = Bal_4$. 
It is not hard to come up with a simple recurrence for this though since $T_{2^n}$ has 
the recursive structure as illustrated in Figure \ref{fig:t2nbalanced}.

\begin{figure}
    \centering
    \begin{tikzpicture}
        \draw [thick] (1,1)--(0,0);
        \draw [thick] (1,1)--(2,0);
        \draw (0,0) node[below]{$T_{2^{n-1}}$};
        \draw (2,0) node[below]{$T_{2^{n-1}}$};
    \end{tikzpicture}
    \caption{}
    \label{fig:t2nbalanced}
\end{figure}

Note that $m_{2,2^n} = 2m_{2,2^{n-1}} + \binom{2^{n-1}}{2}^2$ since the only 
ways we can choose a subset $S$ such that $T_{2^n}|_S = Bal_4$ are that the leaves 
in $S$ fall either entirely within the left or right subtrees or that $S$ has two leaves 
from both the left and right subtrees. 
The number of ways to choose a subset $S$ that falls entirely 
on the left or right side is  $m_{2,2^{n-1}}$ by definition. 
The number of ways to choose two leaves from each side is  $\binom{2^{n-1}}{2}^2$. 
This recurrence can be solved to find an explicit formula for $m_{2,2^n}$ which is
\[
m_{2,2^n} = \sum_{i=1}^{n-1} 2^{n-i-1}\binom{2^i}{2}^2 
\]
Now we can simplify  $\frac{m_{2,2^n}}{\binom{2^n}{4}}$ to get
\[\frac{m_{2,2^n}}{\binom{2^n}{4}} = \frac{3(2^n) - 5}{7(2^n)-21}\]
which converges to $\frac{3}{7}$ as $n$ tends to infinity. 
\end{proof}

Note that  Theorem \ref{thm:4leafbeta} does not generalize to higher dimensions
as the set of beta splitting distributions is of strictly smaller dimension than
the set of exchangeable sampling consistent distributions.  We explore the
discrepancy between these sets in more detail in the next sections.


\section{Distributions on $EX_5^\infty$}
There are three distinct tree shapes with five leaves so $EX_5$ is a 
$2$-dimensional simplex in $\rr^3$. For the rest of this section we will use 
$Comb_5$, $Gir_5$, and $Bal_5$ to represent the trees pictured in Figure \ref{fig:5leaftrees}. 
Specifically, let $Comb_5$  denote the comb tree on five leaves, $Bal_5$ denote the
balanced tree on five leaves and $Gir_5$ denote the giraffe tree on five leaves. 
We take coordinates $(p_1,p_2,p_3)$ on $\rr^3$ where $p_1,p_2,p_3$  
represent the probability of obtaining $Comb_5$, $Gir_5$, and $Bal_5$, respectively. 

\begin{figure}
    \centering
    
    \begin{tikzpicture} [scale = .5,thick]

    \draw (2.5,5)--(5,0);
    \draw (2.5,5)--(0,0);
    \draw (3,4)--(1,0);
    \draw (3.5,3)--(2,0);
    \draw (4,2)--(3,0);
    \draw (2.5,-.5) node[below]{$Comb_5$};

    \draw (8.5,5)--(6,0);
    \draw (8.5,5)--(11,0);
    \draw (9,4)--(7,0);
    \draw (7.5,1)--(8,0);
    \draw (10.5,1)--(10,0);
    \draw (8.5,-.5) node[below]{$Gir_5$};

    \draw (14.5,5)--(12,0);
    \draw (14.5,5)--(17,0);
    \draw (12.5,1)--(13,0);
    \draw (16.5,1)--(16,0);
    \draw (16,2)--(15,0);
    \draw (14.5,-.5) node[below]{$Bal_5$};
    
    \end{tikzpicture}

    \caption{Tree shapes on five leaves}
    \label{fig:5leaftrees}
\end{figure}
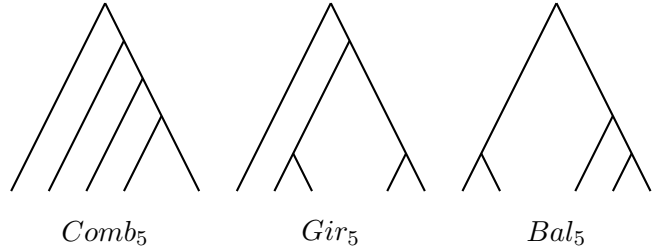

While have not been able to give a complete description of the vertices of 
$EX_5^n$ for all $n$,  we are able to define some tree structures in 
$RB_U(n)$ that do yield vertices of $EX_5^n$. We have already seen that the
comb tree $Comb_m$ always yields a vertex of $EX_n^m$ for all $m$ and $n$.
Here we provide some other examples.

\begin{defn}
For a tree $T \in RB_U(m)$ let $comb(T, n)$  be the tree that is 
obtained by creating a comb tree with $n$ leaves and replacing one of the two leaves 
at the deepest level with the tree $T$.  
\end{defn}

Generally, if $T \in RB_U(m)$ then $comb(T, n)$ has $m + n - 1$ vertices.
For example,  $Gir_5 = comb(Bal_4, 2)$.  
Note that does not matter which of the leaves is replaced with 
$T$ since our trees are unlabelled.

\begin{prop}
Let $T_n = comb(Gir_5,n-4)$.  Then $\pi_5( p_{T_n})$ is a vertex in $EX_{5}^n$.
\end{prop}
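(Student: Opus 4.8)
The plan is to show $\pi_5(p_{T_n})$ is a vertex of $EX_5^n$ by exhibiting a linear functional on $\mathbb{R}^3$ that is uniquely maximized (or minimized) at this point among all $\pi_5(p_T)$, $T \in RB_U(n)$; by Lemma~\ref{lemma:exnm} this suffices. First I would compute $\pi_5(p_{T_n})$ explicitly using Corollary~\ref{cor:induceddensity}: $T_n = comb(Gir_5, n-4)$ is a comb of length $n-5$ with a copy of $Gir_5$ hanging at the bottom, i.e.\ the tree has a "caterpillar spine" with one balanced clump of $4$ leaves plus one cherry at the very bottom of $Gir_5$. A $5$-subset $S$ of the leaves restricts to one of $Comb_5$, $Gir_5$, $Bal_5$, and I would count, as a function of $n$, how many of the $\binom{n}{4}$... (careful: here it is $\binom{n}{5}$) $5$-subsets give each shape. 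The key structural point is that $Bal_5$ never appears as a restriction tree of $T_n$: the only sources of "balance" in $T_n$ are the two cherries inside the $Gir_5$ piece, and $T_n$ has too little internal structure to ever induce $Bal_5$. So $\pi_5(p_{T_n})$ lies on the edge $\{p_3 = 0\}$ of the simplex $EX_5$, i.e.\ the face spanned by $Comb_5$ and $Gir_5$.

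Next I would argue that $\pi_5(p_{T_n})$ is in fact a vertex of $EX_5^n$, not merely a boundary point. Since $EX_5^n \subseteq EX_5 = \Delta_2$, and the comb vertex $(1,0,0)$ is already a vertex of $EX_5^n$ by Lemma~\ref{lem:combvert}, it is enough to show that $\pi_5(p_{T_n})$ maximizes $p_2$ (the $Gir_5$-density) among all $T \in RB_U(n)$ with $p_3 = 0$, and more strongly that it is the unique extreme point of $EX_5^n$ on the edge $\{p_3 = 0\}$ other than the comb — equivalently, that among all $n$-leaf trees whose $Bal_5$-density is zero, $T_n$ has the strictly largest $Gir_5$-density. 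Trees $T$ with zero $Bal_5$-density are highly constrained (this is a "caterpillar-like" condition: $T$ can contain at most one small balanced block, essentially a bounded-size perturbation of the comb), so one can enumerate the candidate families and compare their $Gir_5$-densities as functions of $n$. Combined with the two-dimensional geometry — on the line $p_3 = 0$, $EX_5^n$ is a segment from $(1,0,0)$ to its point of maximal $p_2$, and any interior point of that segment is not a vertex — this pins down $\pi_5(p_{T_n})$ as the second vertex on that face.

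Finally I would package this: choose the linear functional $\ell(p_1,p_2,p_3) = p_2 - c\, p_3$ for a suitably large constant $c > 0$. Then $\ell$ is maximized over $EX_5^n$ only at points with $p_3 = 0$ and maximal $p_2$ among those; by the counting above that maximum is attained uniquely at $\pi_5(p_{T_n})$, so $\pi_5(p_{T_n})$ is a vertex.

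The main obstacle I anticipate is the combinatorial heart of the argument: rigorously showing that, among all $n$-leaf binary trees with zero $Bal_5$-density, $T_n = comb(Gir_5, n-4)$ strictly maximizes the $Gir_5$-density. This requires first characterizing which tree shapes can possibly have $Bal_5$-density zero (I expect these to be exactly combs with a single balanced "defect" of bounded size near one end, of which $comb(Bal_4,\,n-3)$, $comb(Gir_5,\,n-4)$, and a few close relatives are the candidates), and then computing and comparing the leading-order behavior in $n$ of the $Gir_5$-density for each candidate. The restriction-tree counts are explicit polynomials in $n$, so the comparison is routine once the candidate list is correct; getting that list complete is the delicate step, and I would handle it by a short case analysis on how an internal cherry of a general tree forces $Bal_5$ restrictions unless the surrounding structure is a comb.
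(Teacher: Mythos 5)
Your proposal is correct in outline and rests on the same key observation as the paper's proof: $T_n$ has zero $Bal_5$-density, so $\pi_5(p_{T_n})$ lies on the face $\{p_3=0\}$ of $EX_5^n$ (a face because $p_3\geq 0$ on all of $EX_5$). Where you diverge is in how you certify extremality on that face, and here the ``delicate step'' you flag as the main obstacle dissolves entirely: a tree $T\in RB_U(n)$ contains $Bal_5$ as a restriction tree if and only if some internal vertex has at least $2$ leaves below one child and at least $3$ below the other, since any $2$ leaves restrict to a cherry and any $3$ leaves restrict to the unique $3$-leaf shape. Consequently every vertex with at least $5$ descendant leaves must be a $1$--$k$ split, and the only $n$-leaf trees avoiding $Bal_5$ are $Comb_n$ and $comb(Bal_4,n-3)=comb(Gir_5,n-4)=T_n$ (your candidate $comb(Bal_4,n-3)$ is not a ``close relative'' but literally the same tree). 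Once you know that exactly two of the generating points $\pi_5(p_T)$ lie on this face, Lemma \ref{lemma:exnm} tells you the face is the segment joining these two distinct points, so both endpoints are vertices of $EX_5^n$ --- no comparison of $Gir_5$-densities and no functional $p_2-c\,p_3$ is needed. (Your functional would also work for each fixed $n$, since $Comb_n$ has $Gir_5$-density $0$ while $T_n$ does not; but note that being the unique $p_2$-maximizer on the face is sufficient for, not equivalent to, being an extreme point of it, as you assert.) This is exactly the paper's argument; your route reaches the same conclusion with an unnecessary enumeration-and-comparison layer.
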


\begin{proof}
First note that $T_n$ and $Comb_n$  are the only trees with $n$ leaves
that do not have $Bal_5$ as a subtree. 
This means that $T_n$ and the comb tree fall on the line $p_3 = 0$ in $EX_5$. 
Thus, the set $\{(p_1, p_2, 0) \in EX_n \} \cap EX_5^n$ is a face of $EX_n^n$ for all $ n \geq 5$, 
since every distribution $p \in EX_5$ must satisfy the condition $p_1 + p_2 + p_3 = 1$ 
and thus $p_1 + p_2 \leq 1$. Since $p_1 + p_2 = 1$ is the same line as $p_3 = 0$ it defines a face. 
Now since $\pi_5( p_{T_n})$ and $\pi_5(p_{Comb_n})$ are different points are the only 
distributions of the form $\pi_5(p_T)$ in this face, 
they must be vertices of this face and thus vertices of $EX_5^n$. 
\end{proof}

We now introduce another tree structure that will yield a vertex in $EX_5^n$. 

\begin{defn}
For two positive integers $m$ and $n$ let $bicomb(m,n)$ denote the tree made 
by joining a comb tree of size $m$ and a comb tree of size $n$ together 
at a new root. We call such trees \emph{bicomb trees}.
\end{defn}

For example, $Bal_5 =  bicomb(2,3)$.

\begin{lemma}
Let $T_n = bicomb(\floor{\frac{n}{2}}, \lceil \frac{n}{2} \rceil)$.  Then
$\pi_5( p_{T_n})$ is a vertex of $EX_5^n$. 
\end{lemma}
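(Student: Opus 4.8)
The plan is to mimic the strategy used for $comb(Gir_5, n-4)$ in the preceding proposition: identify a low-dimensional face of $EX_5$ on which $\pi_5(p_{T_n})$ lives, and argue it is a vertex of $EX_5^n$ because it is (one of very few) points of the form $\pi_5(p_T)$ in that face. Concretely, I would first classify which $5$-leaf shapes occur as restriction subtrees of $T_n = bicomb(\floor{\frac{n}{2}}, \lceil\frac{n}{2}\rceil)$. Since each of the two halves of $T_n$ is itself a comb, a $5$-subset $S$ restricts to $Comb_5$ unless $S$ splits $3{+}2$ (or $2{+}3$) across the root, in which case it restricts to $Bal_5 = bicomb(2,3)$; it can never restrict to $Gir_5$, because $Gir_5 = comb(Bal_4,2)$ needs a $Bal_4$ subtree, which a comb does not contain. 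So $\pi_5(p_{T_n})$ lies on the face $p_2 = 0$ of $EX_5$ (no $Gir_5$ mass).

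Next I would check that $\{(p_1,0,p_3) \in EX_5\} \cap EX_5^n$ is genuinely a face of $EX_5^n$: the hyperplane $p_2 = 0$ is supporting because $p_2 \geq 0$ on the whole simplex, exactly as in the $comb(Gir_5,n-4)$ argument. Then I would enumerate all tree shapes $T \in RB_U(n)$ with $\pi_5(p_T)$ on this face, i.e. all $n$-leaf shapes with no $Gir_5 = comb(Bal_4,2)$ subtree. These are precisely the $n$-leaf shapes containing no $Bal_4$ subtree adjacent to the appropriate comb structure — I expect this to be exactly $Comb_n$ and the bicomb trees $bicomb(k, n-k)$, since any shape that is not a "comb of combs" contains an internal cherry-of-cherries pattern producing a $Gir_5$. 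Among these, $Comb_n$ maps to $(1,0,0)$, and each $bicomb(k,n-k)$ maps to a distinct point $\big(1 - \frac{m_{3,k,n}}{\binom{n}{5}},\, 0,\, \frac{m_{3,k,n}}{\binom{n}{5}}\big)$ where $m_{3,k,n} = \binom{k}{3}\binom{n-k}{2} + \binom{k}{2}\binom{n-k}{3}$ counts the $Bal_5$-restrictions. The balanced choice $k = \floor{\frac{n}{2}}$ maximizes this count, so $\pi_5(p_{T_n})$ is the extreme point of the face in the $p_3$ direction, hence a vertex of the face, hence a vertex of $EX_5^n$ by Lemma \ref{lemma:exnm}.

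The main obstacle is the combinatorial classification step: proving that $Comb_n$ and the bicomb trees are the \emph{only} $n$-leaf shapes with $\pi_5(p_T)$ on the $p_2 = 0$ face. For the $comb(Gir_5,n-4)$ proposition this was the crisp fact "$T_n$ and $Comb_n$ are the only trees with no $Bal_5$ subtree"; here the analogous statement ("no $Gir_5$ subtree") needs a short structural argument. I would prove it by showing that if a shape is not a comb and not a bicomb, then some internal edge has a non-comb subtree hanging below a non-trivial comb segment, which exhibits a $Bal_4$ inside a larger comb and hence a $Gir_5$ restriction. A secondary, routine obstacle is confirming monotonicity of $m_{3,k,n}$ in $|k - n/2|$ so that the balanced split is strictly maximal — a one-line convexity/AM-GM type estimate on $\binom{k}{3}\binom{n-k}{2} + \binom{k}{2}\binom{n-k}{3}$ — together with verifying that distinct bicombs really do give distinct $p_3$ values so none of them is an interior point of a segment between others.
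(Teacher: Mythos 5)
Your proposal is correct and follows essentially the same route as the paper's proof: identify the face $p_2=0$ of $EX_5^n$, observe that only $Comb_n$ and the bicomb trees avoid $Gir_5$ as a restriction tree, count $Bal_5$-restrictions of $bicomb(i,n-i)$ as $\binom{i}{2}\binom{n-i}{3}+\binom{i}{3}\binom{n-i}{2}$, and note this is maximized at $i=\floor{\frac{n}{2}}$, making $\pi_5(p_{T_n})$ extremal on that face. The only difference is that you explicitly flag (and sketch arguments for) the classification and maximization steps that the paper asserts without proof, which is a point in your favor rather than a divergence.
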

\begin{proof}
First note that for $n \geq 5$, the only trees in $RB_U(n)$ that never 
contain $Gir_5$ as a restriction tree are the comb tree and the bicomb trees. 
This means that in $EX_5^n$, they are the only trees that fall on the edge $p_2 = 0$. 
To show that $\pi_5( p_{T_n})$ is a vertex of $EX_5^n$  
it remains to to show that $\pi_5( p_{T_n})$ is extremal on this edge. 
We know that the comb tree is one of the extremal points on this edge and 
so the other extremal point will correspond to the bicomb tree with the 
highest density of $Bal_5$ as a restriction tree. 
Let $T' = bicomb(i,n-i)$  be a bicomb tree
for some $1 \leq i \leq n-1$.  We let $b_5(T')$ denote the number 
of times that $Bal_5$ occurs as a restriction tree of $T'$. 
From the structure of a bicomb tree we have
\[
b_5(T') = \binom{i}{2}\binom{n-i}{3} + \binom{i}{3}\binom{n-i}{2}.
\]
This function is maximized when $i = \floor{\frac{n}{2}}$. 
\end{proof}

Now we will show that
the projection of the most balanced tree in $RB_U(n)$ is a 
vertex of $EX_5^n$.   To do this, we prove a few lemmas about
the number of $Comb_5$ trees that can appear as subtrees of a tree.
These results follow the basic outline of Lemmas 12 and 13 
in \cite{co18}, and are in some sense an extension of those results to
$5$ leaf trees.  

For a tree $T \in RB_U(n)$ let 
$c_5(T)$ count the number of $5$-subsets, $S$, of the leaves of $T$ such that $T|_S = Comb_5$. Let $c_4(T)$ be defined similarly, but for $4$ leaf comb trees. 

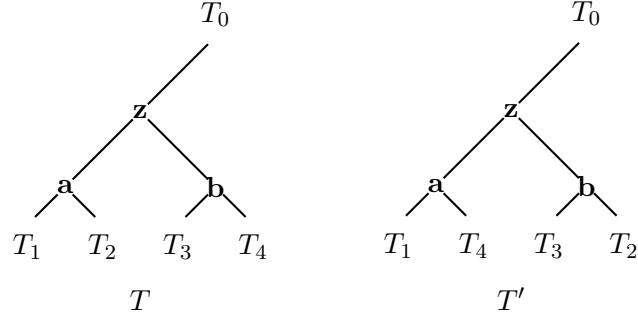
\begin{figure}
    \centering
    \begin{tikzpicture} [scale = .5]
        \node at (5,5) {\textbf{z}};
        \node at (3,3) {\textbf{a}};
        \node at (7,3) {\textbf{b}};
        \node [above] at (7,7) {$T_0$};
        \draw [thick] (5.2,5.2) -- (6.8,6.8);
        \draw [thick] (4.8,4.8) -- (3.2,3.2);
        \draw [thick] (5.2,4.8) -- (6.8,3.2);
        \draw [thick] (2.8,2.8) -- (2.2,2.2);
        \draw [thick] (3.2,2.8) -- (3.8,2.2);
        \draw [thick] (6.8,2.8) -- (6.2,2.2);
        \draw [thick] (7.2,2.8) -- (7.8,2.2);
        \node [below] at (2,2) {$T_1$};
        \node [below] at (4,2) {$T_2$};
        \node [below] at (6,2) {$T_3$};
        \node [below] at (8,2) {$T_4$};
        \node at (5,0) {$T$};
    \end{tikzpicture}  \quad \quad  \quad
    \begin{tikzpicture} [scale =.5]
        \node at (5,5) {\textbf{z}};
        \node at (3,3) {\textbf{a}};
        \node at (7,3) {\textbf{b}};
        \node [above] at (7,7) {$T_0$};
        \draw [thick] (5.2,5.2) -- (6.8,6.8);
        \draw [thick] (4.8,4.8) -- (3.2,3.2);
        \draw [thick] (5.2,4.8) -- (6.8,3.2);
        \draw [thick] (2.8,2.8) -- (2.2,2.2);
        \draw [thick] (3.2,2.8) -- (3.8,2.2);
        \draw [thick] (6.8,2.8) -- (6.2,2.2);
        \draw [thick] (7.2,2.8) -- (7.8,2.2);
        \node [below] at (2,2) {$T_1$};
        \node [below] at (4,2) {$T_4$};
        \node [below] at (6,2) {$T_3$};
        \node [below] at (8,2) {$T_2$};
        \node at (5,0) {$T'$};
    \end{tikzpicture}
    \caption{The two trees from the Proof of  Lemma \ref{lem:switch}.  Note that $T_0$ denotes
    all of the part of the tree that lies above the vertex $z$. }
    \label{fig:switch}
\end{figure}

\begin{lemma}\label{lem:switch}
Let $T$ be as it is pictured in Figure \ref{fig:switch} 
and $T'$ obtained from $T$ by swapping the positions of $T_2$ and $T_3$. For $i=0,1,2,3,4$, let $n_i = \# L(T_i)$ and without loss of generality choose $n_1 \geq n_2$ and $n_3 \geq n_4$. 
If $n_1 > n_3$ and $n_2 > n_4$ then $c_5(T) \geq c_5(T')$.  Furthermore, if $n \geq 7$, then
$c_5(T) > c_5(T')$.
\end{lemma}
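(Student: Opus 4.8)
The plan is to compute the difference $D := c_5(T) - c_5(T')$ head on and show it is a nonnegative polynomial in $n_0 := \# L(T_0)$ and $n_1, n_2, n_3, n_4$, which is moreover strictly positive once $n \ge 7$. Since $T$ and $T'$ are built from the same pieces $T_0, T_1, T_2, T_3, T_4$ and differ only in how these hang off the vertices $a$ and $b$ below $z$, the natural move is to classify each $5$-subset $S$ of leaves by the vector $\bigl(\#(S \cap L(T_i))\bigr)_{i=0}^{4}$. For a fixed such vector the shape of $T|_S$ is determined by the vector together with the shapes of the restrictions $T_i|_{S \cap L(T_i)}$, and each of the latter is forced whenever the corresponding block contributes at most three leaves; so only blocks contributing four or five leaves can make $T|_S$ depend on more than the vector.

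First I would show that most of these classes contribute identically to $c_5(T)$ and to $c_5(T')$ and hence cancel in $D$. If $S$ puts at least two leaves above $z$, then $S$ meets the subtree rooted at $z$ in at most three leaves; that restriction is the same comb in $T$ and $T'$, the rest of $T|_S$ only involves $T_0$, and so $T|_S = T'|_S$. If $S$ puts at least four leaves inside a single block $L(T_i)$, then reading off the splits at $a, b, z$ shows that $T|_S$ and $T'|_S$ are both combs exactly when $T_i|_{S \cap L(T_i)}$ is, and the number of ways to place the remaining one leaf is the same either way. So $D$ only sees subsets with at most one leaf above $z$ and at most three leaves in each block; in particular every term involving a $c_4(T_i)$ or a $c_5(T_i)$ cancels, and what remains is a polynomial in $n_0, n_1, n_2, n_3, n_4$ alone.

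It then remains to evaluate this polynomial by going through the surviving leaf-distribution patterns. The subsets with exactly one leaf above $z$ contribute $n_0$ times the difference between the number of $4$-subsets of $L(T_1) \cup \cdots \cup L(T_4)$ inducing $Comb_4$ in $T$ and in $T'$; among the relevant block patterns, $(3,1,0,0)$ always produces a $Comb_4$ while $(2,2,0,0)$ and $(1,1,1,1)$ never do, and these counts are symmetric in the $n_i$, so this difference collapses to the single pattern $(2,1,1,0)$. After the $\binom{n_i}{2}$-bookkeeping for the cherry, this difference equals $\tfrac{1}{2}\,n_0\,(n_1-n_3)(n_2-n_4)(n_1 n_3 + n_2 n_4)$ for the interchange pictured in Figure~\ref{fig:switch}, which is $\ge 0$ precisely because the hypotheses give $n_1 > n_3$ and $n_2 > n_4$. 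The subsets with all five leaves below $z$ contribute through the patterns $(3,2,0,0)$, $(3,1,1,0)$, $(2,2,1,0)$, $(2,1,1,1)$ and their permutations; reading off the induced shape of each from the splits at $z, a, b$, I expect the signed total to again reduce to a nonnegative combination of monomials $n_i n_j n_k$ whose signs are pinned down by $n_1 > n_3$ and $n_2 > n_4$, giving $D \ge 0$. For strictness: since every $T_i$ has at least one leaf, the hypotheses already force $n_1, n_2 \ge 2$, so $n_1 + n_2 + n_3 + n_4 \ge 6$, and when $n \ge 7$ either $n_0 \ge 1$, in which case the displayed $(2,1,1,0)$ contribution is strictly positive, or $n_0 = 0$ and $n_1 + n_2 + n_3 + n_4 \ge 7$, in which case one checks from the explicit form that one of the ``all five below $z$'' contributions is strictly positive.

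The main obstacle is the bookkeeping in the last step: enumerating all the leaf-distribution patterns, correctly identifying the shape of $T|_S$ in each (a comb, a balanced four-leaf tree, or a shape recursively determined by a smaller configuration), and carrying out the cancellations so that the surviving polynomial $D$ is visibly nonnegative under $n_1 \ge n_2 > n_4$, $n_1 > n_3 \ge n_4$ and strictly positive when in addition $n \ge 7$. The reductions in the first two paragraphs keep this case analysis finite and isolate exactly the patterns that matter, so the remaining work is routine algebra once that is set up.
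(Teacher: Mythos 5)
Your proposal is correct and follows essentially the same route as the paper: decompose $c_5(T)-c_5(T')$ into the contribution $n_0\bigl(c_4(T_z)-c_4(T'_z)\bigr)$ from subsets with one leaf above $z$ plus the difference $c_5(T_z)-c_5(T'_z)$ from subsets entirely below $z$, observe that all other leaf-placement patterns cancel by symmetry, and evaluate the surviving polynomial (the paper's computation gives $\tfrac{1}{6}(n_1-n_3)(n_2-n_4)\bigl(n_1n_3(n_1+n_3-3)+n_2n_4(n_2+n_4-3)\bigr)$ for the second term, confirming the nonnegativity you anticipate, with the same $n\geq 7$ strictness argument). The only real difference is that you compute the $c_4$ difference directly, whereas the paper converts it to a $b_4$ difference via $\binom{n}{4}=c_4+b_4$ and cites Lemma 12 of \cite{co18}; your explicit formula $\tfrac12 n_0(n_1-n_3)(n_2-n_4)(n_1n_3+n_2n_4)$ is correct and makes that step self-contained.
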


\begin{proof}Without loss of generality assume that $n_1 \geq n_2$ and $n_3 \geq n_4$ and let $\Sigma_z$ denote the set of leaves of $T$ below the vertex $z$.  Note that by
construction, this is the same as the set of leaves below the vertex $z$ in $T'$.
If we take a $5$-subset, $S$, of the leaves of $T$ and $T'$ 
then it is only possible for $T|_S \neq T'|_S$ if $|S \cap \Sigma_z| \geq 4$. It is straightforward to see that if $S \cap \Sigma_z$ has zero, one, two, or three elements, $T|_S = T'|_S$.

This means
\[
c_5(T) - c_5(T') = (c_5(T_z) - c_5(T'_z)) + n_0(c_4(T_z) - c_4(T'_z))  
\]
where $T_z$ and $T'_z$ denote the subtrees of $T$ and $T'$ below $z$.
Note that for any tree $S \in RB_U(n)$, it holds that
\[
\binom{n}{4} = c_4(S) + b_4(S)
\]
which gives
\[
n_0(c_4(T_z) - c_4(T'_z)) = n_0(b_4(T'_z) - b_4(T_z))
\]
and $(b_4(T'_z) - b_4(T_z))$ is guaranteed to be positive by Lemma 12 of \cite{co18} so the term $n_0(b_4(T'_z) - b_4(T_z))$ is nonnegative. It remains to show that $(c_5(T_z) - c_5(T'_z))$ is nonnegative. We can explicitly enumerate these quantities in the following way:

\begin{eqnarray*}
c_5(T_z) &=    &\sum_{i=1}^{4}c_5(T_i)+ 
\sum_{i=1}^{4}c_4(T_i)\sum_{j=1,j\neq i}^{4}n_i + \binom{n_1}{3}n_2(n_3+n_4)
\\
&   & \quad \quad \quad \quad \, \, 
+ \binom{n_2}{3}n_1(n_3+n_4) + \binom{n_3}{3}n_4(n_1+n_2)
+ \binom{n_4}{3}n_3(n_1+n_2)
\end{eqnarray*}
\begin{eqnarray*}
c_5(T'_z) &=    &\sum_{i=1}^{4}c_5(T_i)+ 
\sum_{i=1}^{4}c_4(T_i)\sum_{j=1,j\neq i}^{4}n_i + \binom{n_1}{3}n_4(n_2+n_3)
\\
&   & \quad \quad \quad \quad \, \, 
+ \binom{n_4}{3}n_1(n_2+n_3) + \binom{n_2}{3}n_3(n_1+n_4)
+ \binom{n_3}{3}n_2(n_1+n_4)
\end{eqnarray*}
We can simplify this to get that
\[
c_5(T_z) - c_5(T'_z) = \frac{1}{6} (n_1 - n_3) (n_2 - n_4) (n_1 n_3 (-3 + n_1 + n_3) + n_2 n_4 (-3 + n_2 + n_4)).
\]
Note that this quantity is greater than $0$ since $n_1 > n_3$ and $n_2 > n_3$ 
by assumption and $n_i \geq 1$ for $i=1,2,3,4$. Note that if $n \geq 7$, then we either have that $n_0 \geq 1$, or $\sum_{i=1}^4 n_i \geq 7$ which both guarantee that $c_5(T) - c_5(T') > 0$.
\end{proof}

This lemma essentially tells us that if the tree has an internal node that is unbalanced, we can find a tree that has $Comb_5$ appear less frequently as a restriction tree. We now have another lemma following in the style of \cite{co18}. 

\begin{lemma}
\label{lem:switch_leaf}
Let $T$ be as it is pictured in Figure 8 and for $i=0,1,2$, let $n_i = \# L(T_i)$ and assume $n_1 \geq n_2$. We also assume that $n_1+n_2 \geq 3$. Then $c_5(T) \geq c_5(T')$.  
Furthermore, if $n \geq 7$, then
$c_5(T) > c_5(T')$.
\end{lemma}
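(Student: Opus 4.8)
The plan is to follow the template of the proof of Lemma \ref{lem:switch} almost verbatim, with one of the rearranged pieces now being a single leaf. Write $\Sigma_z$ for the set of leaves of $T$ lying below the vertex $z$; by construction the leaves outside $\Sigma_z$, together with the part of the tree above $z$, are identical in $T$ and $T'$, so if $S$ is a $5$-subset of the leaves then $T|_S$ and $T'|_S$ can differ only when $S$ contains enough leaves of $\Sigma_z$ to detect the rearrangement. Since a rooted binary tree on at most three leaves has a unique shape, a case check on $|S\cap\Sigma_z|\in\{0,1,2,3\}$ shows $T|_S=T'|_S$ in those cases, exactly as in Lemma \ref{lem:switch}. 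Hence only subsets with $|S\cap\Sigma_z|\ge 4$ matter, and separating $|S\cap\Sigma_z|=5$ from $|S\cap\Sigma_z|=4$ gives
\[
c_5(T)-c_5(T') = \bigl(c_5(T_z)-c_5(T'_z)\bigr) + n_0\bigl(c_4(T_z)-c_4(T'_z)\bigr),
\]
where $T_z$, $T'_z$ are the subtrees of $T$, $T'$ rooted at $z$ and $n_0=\# L(T_0)$ counts the choices for the single leaf of $S$ lying above $z$.

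For the second term I would argue exactly as before: every $4$-leaf restriction tree is $Comb_4$ or $Bal_4$, so $\binom{n_1+n_2+1}{4}=c_4(S)+b_4(S)$ for $S=T_z$ and $S=T'_z$, whence $n_0(c_4(T_z)-c_4(T'_z))=n_0(b_4(T'_z)-b_4(T_z))$, and this is nonnegative by the $4$-leaf leaf-move result of \cite{co18} (Lemma 13, the companion of the lemma used in the proof of Lemma \ref{lem:switch}), being strictly positive once $n_0\ge 1$ and $n_1+n_2\ge 3$. For the first term I would enumerate $c_5(T_z)$ and $c_5(T'_z)$ directly, classifying the $5$-subsets of $\Sigma_z$ according to how they meet the moved leaf, $L(T_1)$, and $L(T_2)$; since a $5$-subset omitting the moved leaf induces the same tree in $T_z$ and $T'_z$, the difference $c_5(T_z)-c_5(T'_z)$ reduces to a short sum. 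Subtracting and simplifying, as in the displayed computation in the proof of Lemma \ref{lem:switch}, should produce a factored expression consisting of a nonnegative multiple of $(n_1-n_2)$ times a polynomial in $n_1,n_2$ that is positive whenever $n_1+n_2\ge 3$; combined with $n_1\ge n_2$ this gives $c_5(T_z)-c_5(T'_z)\ge 0$, and adding the two nonnegative contributions yields $c_5(T)\ge c_5(T')$.

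For the strict inequality when $n\ge 7$: if $n_0\ge 1$ the $b_4$ term already contributes strictly, and if $n_0=0$ then $\#\Sigma_z=n_1+n_2+1=n\ge 7$, which forces the factored expression for $c_5(T_z)-c_5(T'_z)$ to be strictly positive, the equality cases all having $n_1+n_2$ small. The step I expect to be the main obstacle is the explicit enumeration of $c_5(T_z)-c_5(T'_z)$ and the verification that the resulting polynomial is genuinely $\ge 0$ under the hypotheses $n_1\ge n_2$ and $n_1+n_2\ge 3$ — in particular pinning down exactly which small values of $n_1+n_2$ the hypothesis rules out and checking that the factorization carries no hidden sign; everything else is a direct transcription of the argument for Lemma \ref{lem:switch}.
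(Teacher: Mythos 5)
Your outline is essentially the paper's proof: the same reduction to subsets with $|S\cap\Sigma_z|\ge 4$, the same identity $c_5(T)-c_5(T')=(c_5(T_z)-c_5(T'_z))+n_0(c_4(T_z)-c_4(T'_z))$, the same conversion of the $c_4$ term to $n_0(b_4(T'_z)-b_4(T_z))$ handled by the four-leaf result of \cite{co18}, and the same case split ($n_0\ge 1$ versus $n_0=0$) for strictness when $n\ge 7$. The one point to correct is your guess at the closed form of the first term: the direct enumeration gives
\[
c_5(T_z)-c_5(T'_z)=\binom{n_1}{3}n_2,
\]
which is \emph{not} a multiple of $(n_1-n_2)$. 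This matters for the ``furthermore'' clause: if the difference really factored through $(n_1-n_2)$, it would vanish in the admissible case $n_0=0$, $n_1=n_2=3$, $n=7$, and strictness would fail there; with the correct expression, $n=7$ and $n_0=0$ force $n_1\ge 3$ and $n_2\ge 1$, so $\binom{n_1}{3}n_2>0$ as required. Once you carry out the enumeration you flagged as the remaining work and land on $\binom{n_1}{3}n_2$ rather than your predicted factorization, your argument coincides with the paper's.
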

\begin{figure}
    \centering
    \begin{tikzpicture} [scale = .5]
        \node at (5,5) {\textbf{z}};
        \node at (3,3) {\textbf{a}};
        \node [above] at (7,7) {$T_0$};
        \draw [thick] (5.2,5.2) -- (6.8,6.8);
        \draw [thick] (4.8,4.8) -- (3.2,3.2);
        \draw [thick] (5.2,4.8) -- (7.8,2.2);
        \draw [thick] (2.8,2.8) -- (2.2,2.2);
        \draw [thick] (3.2,2.8) -- (3.8,2.2);
        \node [below] at (2,2) {$T_1$};
        \node [below] at (4,2) {$T_2$};
        \node [below] at (8,2) {$l$};
        \node at (5,0) {$T$};
    \end{tikzpicture}
    \begin{tikzpicture} [scale =.5]
        \node [above] at (7,7) {$T_0$};
        \draw [thick] (5.2,5.2) -- (6.8,6.8);
        \node at (5,5) {\textbf{z}};
        \node at (7,3) {\textbf{b}};
        \draw [thick] (4.8,4.8)--(2.2,2.2);
        \node [below] at (2,2) {$T_1$};
        \draw [thick] (5.2,4.8)--(6.8,3.2);
        \draw [thick] (6.8,2.8)--(6.2,2.2);
        \draw [thick] (7.2,2.8)--(7.8,2.2);
        \node [below] at (6,2) {$T_2$};
        \node [below] at (8,2) {$l$};
        \node at (5,0) {$T'$};
    \end{tikzpicture}
    \caption{}
    \label{fig:my_label}
\end{figure}

\begin{proof}
We will again proceed by showing that $c_5(T) - b_5(T') > 0$. By the same reasoning as that given in the last lemma we know that 
\[c_5(T) - c_5(T') = c_5(T_Z) - c_5(T'_z) + n_0(c_4(T_z) - c_4(T'_z))\]
and the nonnegativity of the second term follows in the same manner that was described in the previous lemma. 
Now we can easily see that 
\[c_5(T_z) = c_5(T_1) + c_5(T_2) + (n_2+1)c_4(T_1) + (n_1+1)c_4(T_2) + \binom{n_1}{3}n_2 + \binom{n_2}{3}n_1 
\]
\[c_5(T'_z) = c_5(T_1) + c_5(T_2) + (n_2+1)c_4(T_1) + (n_1+1)c_4(T_2) + \binom{n_2}{3}n_1 \]
and so
\[c_5(T_Z) - c_5(T'_z) = \binom{n_1}{3}n_2 \]
It is clear that the right hand side is always nonnegative. Note that if $n \geq 7$, then either $n_0 \geq 1$ or $n_1 \geq 3$. In both cases this guarantees that $c_5(T) - c_5(T') > 0$. 

\end{proof}

Combining these two lemmas together we get the following theorem. This theorem will immediately allow us to show that the projection of the most balanced tree in $RB_U(n)$ will always be a vertex in $EX_5^n$. 

\begin{thm}
\label{thm:minc5}
For $n \geq 7$, the minimum value of $c_5(T)$ is attained when every internal node of $T$ is 
maximally balanced.
\end{thm}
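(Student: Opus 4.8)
The plan is to argue that any tree $T$ minimizing $c_5(T)$ must, at every internal node, split its leaves as evenly as possible, and then to observe that this property characterizes a unique tree shape — the most balanced tree in $RB_U(n)$. I would proceed by contradiction: suppose $T$ attains the minimum value of $c_5$ but has some internal vertex $v$ whose two pendant subtrees have leaf counts differing by at least $2$. The goal is to produce from $T$ another tree $T'$ with $c_5(T') < c_5(T)$ (strictly, since $n \geq 7$), contradicting minimality. The two moves available for this are exactly the local rearrangements analyzed in Lemma \ref{lem:switch} and Lemma \ref{lem:switch_leaf}: the ``subtree swap'' that rebalances a node whose children are themselves cherries of unbalanced subtrees, and the ``leaf relocation'' move that pushes a stray leaf down past an unbalanced node. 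Both lemmas are stated so that each move does not increase $c_5$, and strictly decreases it once $n \geq 7$.

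The key steps, in order, are as follows. First, I would set up the reduction: show that if $T$ is not maximally balanced at every internal node, then there is some vertex $z$ in $T$ to which the configuration of Figure \ref{fig:switch} or Figure 8 applies with $n_1 > n_3$ and $n_2 > n_4$ (respectively $n_1 \ge n_2$, $n_1 + n_2 \ge 3$) — i.e.\ that one can always find a local ``witness'' of imbalance of one of these two types. This is the combinatorial heart of the argument: one must check that \emph{every} imbalanced tree admits at least one applicable move, perhaps by descending to a deepest imbalanced node and casework on whether its children are leaves, cherries, or larger subtrees. Second, having found the move, invoke the appropriate lemma to get a tree $T''$ with $c_5(T'') < c_5(T)$; since both moves preserve $n$, $T''$ lies in $RB_U(n)$, contradicting the assumed minimality of $c_5(T)$. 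Third, once we know a minimizer must be maximally balanced at every internal node, I would note (as in the remark after Theorem \ref{thm:Coronado}) that the condition ``every internal vertex has its left and right leaf-counts differing by at most one'' determines a \emph{unique} tree shape in $RB_U(n)$, so the minimum is attained exactly at that tree.

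I expect the main obstacle to be the first step — showing that an arbitrary imbalanced tree always contains a configuration matching one of the two lemmas' hypotheses, including the side conditions $n_1 > n_3$, $n_2 > n_4$ (for Lemma \ref{lem:switch}) and $n_1+n_2 \ge 3$ (for Lemma \ref{lem:switch_leaf}). The subtlety is that a node can be imbalanced in a way where the ``obvious'' swap is between subtrees of equal size, so one needs to choose the witnessing vertex and the partition of its descendants carefully, possibly moving to a child or parent of the first imbalanced node found, or iterating the moves monotonically (using that $c_5$ is a nonnegative integer, so a strictly decreasing sequence of moves must terminate). A clean way to handle this is to argue that among all trees in $RB_U(n)$ with the minimum $c_5$ value, if any fails to be maximally balanced somewhere, a single application of one of the two lemmas strictly decreases $c_5$, which is the desired contradiction; the bookkeeping of \emph{which} lemma applies is then the only thing left to verify, and that is exactly parallel to the treatment of $c_4$ in Lemmas 12 and 13 of \cite{co18}.
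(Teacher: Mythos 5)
Your overall plan---contradiction, a carefully chosen witness node, the two local moves of Lemmas \ref{lem:switch} and \ref{lem:switch_leaf}, strictness from $n\ge 7$, and uniqueness of the maximally balanced shape---is exactly the paper's strategy. But the step you yourself flag as ``the main obstacle'' (showing that an imbalanced minimizer always admits an applicable move) is the entire content of the proof, and you do not resolve it; moreover, the resolution is \emph{not} that every imbalanced tree admits a strictly decreasing move. The paper's argument runs as follows. Choose $z$ to be a non-balanced internal node whose children $a$ and $b$ are both balanced (a deepest non-balanced node works), with $n_a \ge n_b+2$. If $b$ is a leaf, Lemma \ref{lem:switch_leaf} applies directly and $n\ge 7$ gives a strict decrease, contradicting minimality. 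Otherwise $a$ and $b$ are internal and balanced, so their children's leaf counts satisfy $n_1\le n_2+1$ and $n_3\le n_4+1$ (with $n_1\ge n_2$, $n_3\ge n_4$), and the inequality $n_1+n_2\ge n_3+n_4+2$ then forces $n_1>n_3$ unconditionally. Minimality is now used in the \emph{contrapositive} of Lemma \ref{lem:switch}: if $n_2>n_4$ also held, the swap would strictly decrease $c_5$, so $n_4\ge n_2$. Chaining $n_1>n_3\ge n_4\ge n_2\ge n_1-1$ pins down $n_1-1=n_2=n_3=n_4$, whence $n_a=2n_1-1$ and $n_b=2n_1-2$, contradicting $n_a\ge n_b+2$. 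So in the main case no move is ever performed; the \emph{inapplicability} of the move, forced by minimality, yields an arithmetic contradiction with the imbalance of $z$.

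This is precisely why your proposed fallbacks (``moving to a child or parent of the first imbalanced node found,'' or iterating moves until $c_5$ stops decreasing) are not needed and would not by themselves close the gap: for an arbitrary imbalanced node the side condition $n_2>n_4$ of Lemma \ref{lem:switch} genuinely can fail, and iterating non-increasing moves tells you nothing without strictness. The decisive idea you are missing is the specific choice of witness---a non-balanced node with \emph{balanced} children---because the balancedness of $a$ and $b$ constrains the four counts to within one of each other, and that rigidity is what turns ``the swap does not help'' into a numerical contradiction. Your concluding appeal to the uniqueness of the maximally balanced tree is fine and matches the paper.
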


\begin{proof}
This proof also follows the strategy of \cite{co18}. 
We assume that $c_5$ obtains it minimum value in $RB_U(n)$ at $T$ but that 
$T$ is not maximally balanced. We will try to find a contradiction. 
We let $z$ be a non-balanced internal node with balanced children $a$ and $b$. 
We let $n_a$ and $n_b$ be the number of leaves of the trees rooted at $a$ and $b$ respectively. 
Then since $z$ is not balanced we have, without loss of generality, that 
$n_a \geq n_b +2$. If $b$ is a leaf then by Lemma \ref{lem:switch_leaf} 
we immediately have that $c_5(T)$ is not minimum since 
$n \geq 7$. So we have that $n_b \geq 2$ and thus both 
$a$ and $b$ are balanced and must be internal nodes.

We now let $v_1,v_2$ be the children of $a$ and $v_3,v_4$ 
be the children of $b$ and take $n_i = \#L(T_{v_i})$ for $i=1,2,3,4$ 
and once again without loss of generality assume that $n_1 \geq n_2$ and 
$n_3 \geq n_4$. Since both $a$ and $b$ are balanced it must be that $n_1=n_2$ or 
$n_1 = n_2 +1$ and $n_3 = n_4$ or $n_3 = n_4 +1$. Then the assumption that 
$n_a \geq n_b + 2$ immediately gives us that 
\[n_1 + n_2 = n_a \geq n_b + 2 = n_3 + n_4 +2\]
Then by previous assumptions we get that $n_1 > n_3$. Now since $c_5$ is minimum at $T$ and $n \geq 7$, we can apply Lemma \ref{lem:switch} to get that $n_4 \geq n_2$. Stringing together these inequalities we get that 
\[n_1 > n_3 \geq n_4 \geq n_2\]
But since $n_1 = n_2$ or $n_1 = n_2 +1$, the only possibility we have is that
\[n_1 -1 = n_2 = n_3 = n_4\]
But then we get that $n_1 + n_2 = 2n_1 -1$ and $n_3 + n_4 = 2n_1 -2$
which contradicts the inequality $n_1 + n_2 \geq n_3 + n_4 +2$. 
This tells us that any tree with at least $7$ leaves must be maximally 
balanced around every internal node if it obtains the minimum value of 
$c_5$ on $RB_U(n)$. Since there is only one tree that is maximally balanced 
at every internal node, there is a unique minimizer of $T$ in $RB_U(n)$ for $n \geq 7$. 
\end{proof}

\begin{cor}
Let $T_n$ be the maximally balanced tree in $RB_U(n)$. Then $\pi_5(T_n)$ is a vertex of $EX_5^n$. 
\end{cor}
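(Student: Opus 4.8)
The plan is to combine Theorem~\ref{thm:minc5} with the two-dimensional geometry of $EX_5$. Recall that $EX_5 \subseteq \rr^3$ is the triangle $\{(p_1,p_2,p_3) : p_i \geq 0,\ p_1+p_2+p_3 = 1\}$, where $p_1$ is the $Comb_5$-probability. By Lemma~\ref{lemma:exnm}, $EX_5^n$ is the convex hull of the finitely many points $\pi_5(p_T)$ for $T \in RB_U(n)$, so it suffices to exhibit a linear functional on $\rr^3$ that is uniquely minimized (or maximized) over this finite point set at $\pi_5(p_{T_n})$. The natural candidate is the first coordinate $p_1$ itself: by Corollary~\ref{cor:induceddensity}, for any $T \in RB_U(n)$ the first coordinate of $\pi_5(p_T)$ is exactly the induced subtree density of $Comb_5$ in $T$, namely $c_5(T)/\binom{n}{5}$.

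First I would invoke Theorem~\ref{thm:minc5}: for $n \geq 7$ the quantity $c_5(T)$ attains its minimum over $RB_U(n)$ uniquely at the maximally balanced tree $T_n$. Dividing by the constant $\binom{n}{5}$, the first coordinate of $\pi_5(p_T)$ is therefore uniquely minimized at $T = T_n$ among all trees in $RB_U(n)$. Since the linear functional $p \mapsto p_1$ is uniquely minimized over the vertex set $\{\pi_5(p_T) : T \in RB_U(n)\}$ at $\pi_5(p_{T_n})$, and $EX_5^n$ is the convex hull of this set, the point $\pi_5(p_{T_n})$ must itself be a vertex of $EX_5^n$ (a point that uniquely optimizes a linear functional over a polytope is necessarily an extreme point). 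For the small cases $5 \leq n \leq 6$ one checks directly: when $n=5$ the maximally balanced tree is $Bal_5$, and $\pi_5(p_{Bal_5})$ is the vertex $(0,0,1)$ of $EX_5 = EX_5^5$; when $n = 6$ the finitely many points $\pi_5(p_T)$ can be listed explicitly and one verifies that the minimizer of the $Comb_5$-density is again a vertex of the hull.

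I do not expect any serious obstacle here, since the hard combinatorial work has already been done in Theorem~\ref{thm:minc5}; the only thing to be careful about is the hypothesis $n \geq 7$ in that theorem, which is why the cases $n = 5, 6$ need to be treated separately by hand. One should also note explicitly that uniqueness of the minimizer (not merely that the minimum is attained there) is what upgrades "extremal" to "vertex," and that this uniqueness is exactly what Theorem~\ref{thm:minc5} provides for $n \geq 7$.
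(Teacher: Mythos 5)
Your proposal is correct and follows essentially the same route as the paper: invoke Theorem~\ref{thm:minc5} to get that $T_n$ is the \emph{unique} minimizer of the $Comb_5$-density among all trees in $RB_U(n)$ for $n \geq 7$, observe that this density is a linear functional (the first coordinate) on $EX_5$ whose minimum over the generating set $\{\pi_5(p_T)\}$ is attained only at $\pi_5(p_{T_n})$, and conclude that this point is a vertex of the convex hull, handling $n=5,6$ by direct inspection. The paper phrases this as the singleton face cut out by the supporting hyperplane rather than as unique optimization of a linear functional, but the content is identical.
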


\begin{proof}
The Corollary can be verified computationally for $n = 6$.  For $n \geq 7$
Theorem \ref{thm:minc5} shows that $T_n$ is the unique tree that attains 
the minimum value of $c_5$ among all trees in $RB_U(n)$. So it holds that 
$\{(p_1,p_2,c_5(T_n)) \in EX_5 \} \cap EX_5^n = \{\pi_5(T_n)\}$, thus $\pi_5(T_n)$ is a vertex of $EX_5^n$.
\end{proof}

We have another Corollary that relates the exchangeable and 
sampling consistent distributions to the $\beta$-splitting model.

\begin{cor}
The projection of the most balanced tree in $EX_5^n$ approaches the $\beta=\infty$ point on the beta-splitting model as $n \to \infty$.
\end{cor}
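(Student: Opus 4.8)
The plan is to compute the limiting coordinates of the projection of the maximally balanced tree $T_n$ explicitly and compare them with the $\beta \to \infty$ limit of the beta-splitting model on $RB_U(5)$. First I would restrict attention to the subsequence $n = 2^k$, just as in the proof of Theorem \ref{thm:4leafbeta}; by Lemma \ref{lem:monotone} the relevant density sequences are monotone, so convergence along $n=2^k$ forces convergence in general, and along this subsequence $T_{2^k}$ is the complete symmetric tree with the clean recursive structure of Figure \ref{fig:t2nbalanced}. I would then set up recurrences, analogous to $m_{2,2^n} = 2m_{2,2^{n-1}} + \binom{2^{n-1}}{2}^2$, for the number of $5$-subsets of the leaves of $T_{2^k}$ that restrict to each of $Comb_5$, $Gir_5$, and $Bal_5$. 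For $Bal_5 = bicomb(2,3)$, a $5$-subset either lies entirely in one of the two copies of $T_{2^{k-1}}$ or splits as $2+3$ or $3+2$ across the two sides (with the side of size $2$ further splitting as $1+1$ at its root); for $Gir_5 = comb(Bal_4,2)$ the subset splitting across the root must have the $Bal_4$ on one side and two leaves on the other; the $Comb_5$ count can be obtained from $c_5$ together with the already-known $c_4$ recurrence from \cite{co18}, or directly by the same case analysis. Each recurrence is of the form $f(k) = 2 f(k-1) + (\text{polynomial in } 2^k)$ and solves in closed form.

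Once the three counts $\#\{S : T_{2^k}|_S = \cdot\}$ are in closed form, I would divide each by $\binom{2^k}{4}\cdot\binom{\ ?\ }{\ ?\ }$ — more precisely by $\binom{2^k}{5}$ — and take $k \to \infty$; the leading terms are all of order $(2^k)^5$, so the limits are explicit rational numbers $(p_1^*, p_2^*, p_3^*)$ summing to $1$. On the other side, I would plug the beta-splitting formula (\ref{eq:beta}) into the recursive description of the probability of $Comb_5$, $Gir_5$, $Bal_5$ as products of splitting probabilities $q_k(i)$, using the simplified rational-function form $q_n(i) = \binom{n}{i}(i+\beta)_i(n-i+\beta)_{n-i}/((n+2\beta+1)_n - 2(n+\beta)_n)$, and take $\beta \to \infty$. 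Since each $q_n(i)$ is a rational function of $\beta$ whose numerator and denominator have matching degree in the relevant leading term, these limits are again explicit rationals, and the ``$\beta = \infty$ point'' is a single well-defined vertex of the beta-splitting curve in $EX_5$. The content of the corollary is then the identity $(p_1^*, p_2^*, p_3^*) = \lim_{\beta\to\infty}(p(Comb_5), p(Gir_5), p(Bal_5))$, which is verified by direct comparison of the two computations.

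Conceptually, the reason to expect agreement is that as $\beta \to \infty$ the beta-splitting model concentrates on the most balanced split at every stage, so its trees converge in distribution to the maximally balanced shape in a way compatible with the sampling-consistent projections; the corollary makes this precise at the level of $5$-leaf subtree densities. I would phrase the proof so that the maximally balanced tree's subtree-density vector and the $\beta\to\infty$ beta-splitting vector are each computed once and seen to coincide, invoking Theorem \ref{thm:minc5} (and its Corollary) only to the extent needed to know that $\pi_5(T_n)$ is the relevant extreme point and that the monotone-subsequence reduction is legitimate. The main obstacle I anticipate is purely bookkeeping: correctly enumerating, without double-counting, the ways a $5$-subset of a balanced tree restricts to each of the three shapes across a binary split — in particular handling the $Gir_5$ case, where the asymmetry of $comb(Bal_4,2)$ means the two sides of the root play different roles, and making sure the cross terms in the recurrences use the right binomial coefficients. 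Once those recurrences are pinned down, solving them and taking both limits is routine, and the final comparison is a one-line check.
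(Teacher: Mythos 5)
Your proposal follows essentially the same route as the paper: restrict to the complete symmetric trees $T_{2^k}$, set up recurrences of the form $f(k)=2f(k-1)+(\text{cross terms})$ for the number of $5$-subsets restricting to each shape, solve, divide by $\binom{2^k}{5}$, and compare the limit $(4/21,\,1/7,\,2/3)$ with the $\beta\to\infty$ limit of the rational functions $q_n(i)$. The paper does exactly this (it only writes down the recurrences for $Gir_5$ and $Bal_5$, getting $Comb_5$ by complementation), so the strategy, the subsequence reduction, and the final comparison are all as in the paper.

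One concrete slip in your case analysis would derail the computation if carried out literally: you say that for $Gir_5=comb(Bal_4,2)$ ``the subset splitting across the root must have the $Bal_4$ on one side and two leaves on the other.'' That accounts for $4+2=6$ leaves. The correct statement is that a $5$-subset of $T_{2^k}$ restricting to $Gir_5$ and meeting both sides of the root must split as $4+1$, with the four leaves on one side restricting to $Bal_4$ and a \emph{single} leaf on the other; this gives the cross term $2\cdot 2^{k-1}\, m_{2,2^{k-1}}$ in the paper's recurrence $g_5(T_{2^k})=2g_5(T_{2^{k-1}})+2^{k}m_{2,2^{k-1}}$ (and, correspondingly, the $2+3$ splits always produce $Bal_5$, while the $4+1$ splits whose $4$-side is $Comb_4$ produce $Comb_5$). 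With that correction the recurrences close up, the limits come out to $p_2\to 1/7$ and $p_3\to 2/3$, and the rest of your argument goes through as stated.
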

 
\begin{proof}
It is enough to show that the complete symmetric tree $T_{2^n} \in RB_U(2^n)$ 
satisfies this property. We can just count the number of times that $Gir_5$ and 
$Bal_5$ occur as restriction trees when we restrict to a 5-subset of the leaves.
We will call these quantities $g_{5,2^n}$ and $b_{5,2^n}$ respectively. Once again 
since $T_{2^n}$ has the structure depicted in Figure 5 and we can use this structure 
to write down a simple recurrence for $g_{5,2^n}$ and $b_{5,2^n}$ and then solve the recurrence. 
Since we can either choose our subset to be on either the right or 
left side of the tree or 3 leaves from one side and 2 leaves from the other, $b_5$ is simply
\[
b_5(T_{2^n}) = 2b_5(T_{2^{n-1}}) + 2\binom{2^{n-1}}{3}\binom{2^{n-1}}{2}
\]
As for $g_5$, we can once again choose our subset to be on either the right or left side of the tree or we can choose to have 1 leaf on a side of the tree and a 4 leaf symmetric tree on the other. This can be done in just $2^{n-1}m_{2,2^{n-1}}$ ways. So $g_5(T_{2^n})$ is just
\[g_5(T_{2^n}) =  2g_5(T_{2^{n-1}})+ 2(2^{n-1}m_{2,2^{n-1}}) = 2g_5(T_{2^{n-1}}) + 2^{n}m_{2,2^{n-1}}\]
Both of these recurrences can be solved explicitly using a computer algebra system. We get that
\[b_5(T_{2^n}) = \frac{1}{315}2^{n-2}(2^n-4)(2^n-2)(2^n-1)(7*2^n-11)\]
\[g_5(T_{2^n}) = \frac{1}{105}2^{n-3}(2^n-4)(2^n-3)(2^n-2)(2^n-1)\]
We can then find the probabilities $p_2$ and $p_3$ of $Gir_5$ and $Bal_5$ by simply dividing out by $\binom{2^n}{5}$. This yields
\[p_3 = \frac{b_5(T_{2^n})}{\binom{2^n}{5}} = \frac{2}{3} + \frac{20}{21(2^n-3)}\]
\[p_2 = \frac{g_5(T_{2^n})}{\binom{2^n}{5}} = \frac{1}{7}\]
Clearly as  $n \rightarrow \infty$ we have $p_3 \rightarrow \frac{2}{3}$ and 
$p_2 \rightarrow \frac{1}{7}$. 

On the other hand, we recall that the probability of obtaining a tree 
under the beta-splitting model is just a rational function in $\beta$ that can
be explicitly calculated. We can then find the limit of these rational 
functions to get that the beta-splitting curve approaches the point
\[(p_1,p_2,p_3) = (\frac{4}{21},\frac{1}{7},\frac{2}{3}) \]
as $\beta \to \infty$ as well and so the projection of $T_{2^n}$ in 
$EX_5^{2^n}$ is approaching the $\beta=\infty$ point on the curve. 
\end{proof}

These are all of the tree structures in $RB_U(n)$ we have been able to 
find that always appear as vertices in $EX_5^n$. We end this section 
with Figure \ref{fig:alldists}, which pictures all of the families of 
exchangeable and sampling consistent distributions that we have 
discussed and the vertices of $EX_n^m$ for some small values of $m$. 

\begin{figure}
    \centering
    \includegraphics[scale=.5]{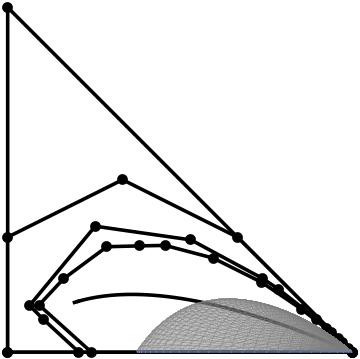}
    \caption{The multinomial model on the three leaf comb tree is in grey and the $\beta$-splitting model is the thick black curve. The thinner black lines are the boundary of $EX_5^n$ for n=5,6,9,12.}
    \label{fig:alldists}
\end{figure}


\section{Distributions on $EX_n^\infty$}
While we are not able to get a description of the vertices of $EX_n^m$ 
for general $m$ and $n$, it is possible to to describe $EX_n^\infty$ 
using the multinomial model that was introduced in Section \ref{sec:multimodel}. 
In particular, this shows that multinomial models converge as an inner
limit to $EX_n^\infty$.  

\begin{thm}
\label{thm:indmulti}
Let $\{T_m\}_{m=n}^\infty$ be a sequence of tree shapes and $p^{(m)} = \pi_n(T_m)$
be the corresponding sequence of distributions. If $p^{(m)}$ converges to some 
$p \in EX_n^\infty$ as $m$ goes to infinity, then there exists a sequence of 
multinomial distributions $\{d^{(m)}\}_{m=n}^\infty$ that also converges to $p$ as $m$ goes to infinity. 
\end{thm}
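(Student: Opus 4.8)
The plan is to produce, for each $m$, an explicit multinomial distribution built on $T_m$ itself with a degenerate choice of edge parameters, and to show it lies within total variation distance $O(1/m)$ of $p^{(m)} = \pi_n(p_{T_m})$. Concretely, given $T_m \in RB_U(m)$, let $t^{(m)}$ be the parameter vector on $E(T_m)$ (equivalently on $E(\tilde{T}_m)$, with the extra edge getting weight $0$) that assigns mass $\tfrac{1}{m}$ to each of the $m$ pendant edges of $T_m$ and mass $0$ to every internal edge; since the parameters in the multinomial model are only required to be nonnegative and to sum to $1$, this is a legitimate point of the parameter simplex, so $d^{(m)} := p_{T_m, t^{(m)}}$ is a genuine multinomial distribution on $RB_U(n)$. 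The idea is that drawing the size-$n$ multiset $A$ under $t^{(m)}$ is the same as drawing $n$ leaves of $T_m$ independently and uniformly at random, and that for large $m$ these $n$ draws are pairwise distinct with high probability, in which case the tree $T_A$ produced by the multinomial construction is exactly the restriction tree of $T_m$ to the sampled leaves.

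To make this precise I would argue as follows. Let $X_1,\dots,X_n$ be i.i.d.\ uniform on the $m$ pendant edges of $T_m$, forming the multiset $A$, and let $C_m$ be the event that $X_1,\dots,X_n$ are pairwise distinct; a union bound over pairs gives $\Pr(C_m^c) \le \binom{n}{2}/m$. Conditioned on $C_m$, the unordered set $S$ of sampled leaves is uniformly distributed over all $n$-subsets of $L(T_m)$, and the multinomial construction yields $T_A = T_m|_S$, because attaching a fresh leaf as a sibling of each sampled leaf and then restricting to the fresh leaves reproduces the restriction topology on the sampled leaves. Hence, by Corollary~\ref{cor:induceddensity}, the conditional law of $T_A$ given $C_m$ is exactly $p^{(m)}$; since $d^{(m)}$ is the unconditional law of $T_A$, for every set $E$ of $n$-leaf tree shapes we obtain
\[
\big| d^{(m)}(E) - p^{(m)}(E) \big| \;=\; \Pr(C_m^c)\,\big| p^{(m)}(E) - \Pr(T_A \in E \mid C_m^c)\big| \;\le\; \frac{\binom{n}{2}}{m}.
\]
Thus $d^{(m)}$ and $p^{(m)}$ are within total variation distance $\binom{n}{2}/m$, and since $p^{(m)} \to p$ by hypothesis while $\binom{n}{2}/m \to 0$, the triangle inequality gives $d^{(m)} \to p$, as required.

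The whole argument is short, and I expect the only point demanding real care is the verification underpinning the displayed computation: that conditioned on the no-collision event $C_m$, (i) the sampled leaf set $S$ is uniform over the $n$-subsets of $L(T_m)$ and (ii) the multinomial tree $T_A$ literally coincides with $T_m|_S$. Both are immediate from the definition of the multinomial construction together with the observation that subdividing a pendant edge and hanging a new leaf there does not alter the restriction topology on the original leaves, but making these precise is the crux of a clean write-up. A minor bookkeeping issue is whether one runs the construction on $T_m$ or on its root-extended version $\tilde{T}_m$ (both work, the extra root edge simply receiving parameter $0$), and one should record explicitly that putting weight $0$ on the internal edges of $T_m$ is permitted by the model's definition.
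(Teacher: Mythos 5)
Your proof is correct, and it uses exactly the same multinomial distribution $d^{(m)}$ as the paper (mass $\tfrac1m$ on each pendant edge of $T_m$, zero on internal edges), so the construction is identical; the difference lies in how the convergence $d^{(m)} - p^{(m)} \to 0$ is established. The paper expands $d^{(m)}(T) = \tfrac{1}{m^n}\sum_{\lambda \vdash n}\binom{n}{\lambda}\lvert M^{(m)}_\lambda\rvert$ over integer partitions recording the multiplicities in the multiset $A$, bounds $\lvert M^{(m)}_\lambda\rvert$ by a polynomial of degree $l(\lambda)$ in $m$, and observes that only $\lambda=(1,\dots,1)$ survives in the limit, with $\lvert M^{(m)}_{(1,\dots,1)}\rvert = c_{T_m}(T)$ matching the subtree count in $p^{(m)}(T)=c_{T_m}(T)/\binom{m}{n}$. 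You replace this counting with a conditioning argument: the multiset is the record of $n$ i.i.d.\ uniform leaf draws, the partitions $\lambda\neq(1,\dots,1)$ are precisely the collision event $C_m^c$ with $\Pr(C_m^c)\le\binom{n}{2}/m$, and on $C_m$ the law of $T_A$ is exactly $p^{(m)}$ by Corollary \ref{cor:induceddensity}. The two computations are equivalent in content, but your phrasing is cleaner and buys something concrete: it delivers the uniform total variation bound $\lVert d^{(m)}-p^{(m)}\rVert_{TV}\le\binom{n}{2}/m$ in one line, which is essentially the estimate the paper has to re-derive with additional partition-counting work in Corollary \ref{cor:approxIndMulti}. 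The two verification points you flag at the end (uniformity of $S$ given $C_m$, and $T_A = T_m|_S$ on $C_m$) are indeed the only places requiring care, and both hold for the reasons you give.
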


\begin{proof}
Define $d^{(m)}$ to be the multinomial distribution on the 
tree $T_m$ with the edge parameter vector $(t_e |e \in E(T_m))$ such that 
$t_e = \frac{1}{m}$ if one of the vertices in $e$ is one of the original 
$m$ leaves of $T_m$ and $t_e = 0$ otherwise. Note that these nonzero edge 
parameters are bijectively associated to the leaves of $T_m$ and we may 
call the set of nonzero edge parameters $L(T_m)$ meaning the leaf set of $T_m$. 
To show that $d^{(m)}$ also converges to $p$, it is enough to show that for 
every tree $T \in RB_U(n)$, $\lim_{m \to \infty}d^{(m)}(T) = \lim_{m \to \infty}p^{(m)}(T)$. 
Fix a labelling of $T_m$ and let $c_{T_m}(T)$ be the number of sets 
$S \subseteq [m]$ such that $shape(T_m|_S) = T$. By Corollary \ref{cor:induceddensity}, 
$p^{(m)}(T)$ is the induced subtree density of $T$ in $T_m$, 
so $p^{(m)}(T) = \frac{c_{T_m}(T)}{\binom{m}{n}}$. So
\[
\lim_{m \to \infty}p^{(m)}(T) = \lim_{m \to \infty} \frac{c_{T_m}(T)}{\binom{m}{n}} 
    = \lim_{m \to \infty} \frac{n!}{m^n}c_{T_m}(T)
\]

On the other hand, let $M^{(m)} = \{A \in M_n^{T_m} |{T_m}_A = T, poly(A) \neq 0\}$, then
\[
d^{(m)}(T) = \sum_{A \in M^{(m)}} poly(A) 
\]
by definition and we note by requiring that multisets $A \in M^{(m)}$ have that $poly(A) \neq 0$, $M^{(m)}$ only includes multisets whose support is contained in $L(T_m)$. Also note that $poly(A)$ is either $0$ or $\binom{n}{m_A(t_{e_1}),m_A(t_{e_2}), \ldots m_A(t_{e_{2m-1}})}\frac{1}{m^n}$ since all the edge parameters are $0$ or $\frac{1}{m}$. 
So to understand the quantity $d^{(m)}(T)$ it is enough to know the coefficient of $\frac{1}{m^n}$. Note that any multiset $A$ has a naturally associated integer partition of $n$ to it, formed by taking the multiplicities of each unique element that appears in it. Call this integer partition the weight of $A$, denoted $wt(A)$, and let $M_\lambda^{(m)}$ be the set of multisets in $M^{(m)}$ with weight $\lambda$. Now observe that for $A,B \in M_\lambda^{(m)}$, $poly(A) = poly(B)$ since the value of the multinomial coefficient is totally determined by the weight and the product of the edge parameters is always $\frac{1}{m^n}$. If we let $\binom{n}{\lambda}$ be the value of the multinomial coefficient then the formula for $d^{(m)}(T)$ can be rewritten as
\[
d^{(m)}(T) = \frac{1}{m^n}\sum_{\lambda \vdash n} \binom{n}{\lambda}|M_\lambda^{(m)}|
\]
but we can bound the quantity $|M_\lambda^{(m)}|$. We note that the quantity $|(M_n^{T_m})_\lambda|$, of all multisets on the edge parameters of $T_m$ of size, with weight $\lambda$, is at most $l(\lambda)!\binom{m}{l(\lambda)}$ where $l(\lambda)$ is the length of the partition $\lambda$. This is because there are $\binom{m}{l(\lambda)}$ choices for which elements to use in the multiset and at most $l(\lambda)!$ unique multisets for each choice of elements. Since $l(\lambda)!\binom{m}{l(\lambda)}$ is a polynomial in $m$ of degree $l(\lambda)$ though, we have that
\[
\lim_{m \to \infty}  \frac{1}{m^n}\sum_{\lambda \vdash n} \binom{n}{\lambda}|M_\lambda^{(m)}| = 
\lim_{m \to \infty} \frac{n!}{m^n}|M_{(1,1,\ldots, 1)}^{(m)}|
\]
since the partition $\lambda = (1,1,\ldots 1)$ is the only partition where $|M_{(1,1,\ldots, 1)}^{(m)}|$ is of the order $m^n$, and so is the only term that contributes to the limit. Now we note that the multisets $A \in M_{(1,1,\ldots, 1)}^{(m)}$ correspond exactly to choosing subsets of the leaves of $T_m$ that yield $T$ upon restriction since the only edges that can be in $A$ are those corresponding to leaves, every leaf can be chosen at most once, and $shape({T_m}_A) = T$. So $|M_{(1,1,\ldots, 1)}^{(m)}| = c_{T_m}(T)$, and so
\[
\lim_{m \to \infty}d^{(m)} = \lim_{m \to \infty} \frac{n!}{m^n}c_{T_m}(T) = \lim_{m \to \infty}p^{(m)}
\]
and since $p^{(m)}$ converges, to $p$, it must be that $d^{(m)}$ also does.
\end{proof}

\begin{cor}
\label{cor:approxIndMulti}
Suppose that $p \in EX_n^m$ for some $m > n$. Then for any tree $S \in RB_U(n)$, $p(S)$ can be approximated with a distribution $d \in EX_n^\infty$ with error $\frac{C}{m}$, where $C$ is a constant with respect to $m$ that does not depend on the tree $S$. 
\end{cor}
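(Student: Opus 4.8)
The plan is to reduce Corollary \ref{cor:approxIndMulti} to a quantitative version of Theorem \ref{thm:indmulti}. Fix $p \in EX_n^m$ and a tree $S \in RB_U(n)$. By Lemma \ref{lemma:exnm}, $p$ is a convex combination $p = \sum_{T \in RB_U(m)} \lambda_T \, \pi_n(p_T)$ with $\lambda_T \geq 0$ and $\sum_T \lambda_T = 1$. For each tree shape $T \in RB_U(m)$ appearing with $\lambda_T > 0$, let $d_T \in EX_n^\infty$ be the multinomial distribution on $T$ with all leaf-edge parameters equal to $\tfrac1m$ and all other edge parameters $0$, exactly as in the proof of Theorem \ref{thm:indmulti}. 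Then set $d := \sum_T \lambda_T \, d_T$; since each $d_T$ lies in $EX_n^\infty$ (the multinomial model is infinitely sampling consistent) and $EX_n^\infty$ is convex, $d \in EX_n^\infty$. Because both $p(S)$ and $d(S)$ are the same convex combinations of $\pi_n(p_T)(S)$ and $d_T(S)$ respectively, it suffices to bound $|\pi_n(p_T)(S) - d_T(S)|$ by $C/m$ with $C$ independent of $T$ and $S$, and then $|p(S) - d(S)| \le \sum_T \lambda_T |\pi_n(p_T)(S) - d_T(S)| \le C/m$.

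The heart of the argument is therefore the single-tree estimate. From the computation in the proof of Theorem \ref{thm:indmulti} we have, for a fixed labelling of $T$,
\[
\pi_n(p_T)(S) = \frac{c_T(S)}{\binom{m}{n}}, \qquad
d_T(S) = \frac{1}{m^n}\sum_{\lambda \vdash n} \binom{n}{\lambda}\, |M_\lambda|,
\]
where $M_{(1,\dots,1)}$ has size exactly $c_T(S)$ and, for $\lambda \neq (1,\dots,1)$, $|M_\lambda| \le l(\lambda)!\binom{m}{l(\lambda)} \le m^{l(\lambda)} \le m^{n-1}$. Writing $\binom{n}{\lambda} \le n!$ and noting there are a bounded number (say $P(n)$, the number of partitions of $n$) of partitions, the contribution of all $\lambda \neq (1,\dots,1)$ to $d_T(S)$ is at most $n!\,P(n)\,m^{n-1}/m^n = n!\,P(n)/m$. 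It then remains to compare $\frac{n!}{m^n} c_T(S)$ with $\frac{c_T(S)}{\binom{m}{n}}$: since $c_T(S) \le \binom{m}{n}$ and $\big|\frac{n!}{m^n} - \frac{1}{\binom{m}{n}}\big| \binom{m}{n} = \big|\frac{n!\binom{m}{n}}{m^n} - 1\big| = \big|\frac{m(m-1)\cdots(m-n+1)}{m^n} - 1\big|$, which is $O(1/m)$ with an explicit constant depending only on $n$ (it is at most $\binom{n}{2}/m$ for $m \ge n$). Collecting the two pieces gives $|\pi_n(p_T)(S) - d_T(S)| \le C/m$ with $C = C(n)$ only, as required.

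I expect the main obstacle to be purely bookkeeping: making the two error contributions above genuinely uniform in $T$ and $S$. The bound $|M_\lambda| \le l(\lambda)!\binom{m}{l(\lambda)}$ is stated in the proof of Theorem \ref{thm:indmulti} and holds for every tree shape $T$ with $m$ leaves and every target $S$, so uniformity there is automatic; the factorial-ratio estimate $\big|\frac{m(m-1)\cdots(m-n+1)}{m^n}-1\big| \le \binom{n}{2}/m$ depends only on $n$. One should be slightly careful that $c_T(S)$ can be $0$ (if $S$ never appears as a restriction of $T$), but the bounds above degrade gracefully in that case. No genuinely new idea is needed beyond Theorem \ref{thm:indmulti}; the corollary is the effective/quantitative shadow of that theorem, and the constant $C$ can be taken to be something like $n! \, P(n) + \binom{n}{2}$.
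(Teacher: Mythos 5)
Your proposal is correct and follows essentially the same route as the paper's own proof: the same convex decomposition of $p$ via Lemma \ref{lemma:exnm}, the same choice of multinomial distributions $d^T$ from Theorem \ref{thm:indmulti}, and the same two-term triangle-inequality split (comparing $c_T(S)/\binom{m}{n}$ with $n!\,c_T(S)/m^n$, and bounding the non-trivial partitions by $l(\lambda)!\binom{m}{l(\lambda)} \le m^{n-1}$). The only difference is that you make the constants slightly more explicit than the paper does.
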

\begin{proof}
Note that if $p \in EX_n^m$, then we have for every $S \in RB_U(n)$,
\[
p(S) = \sum_{T \in RB_U(m)} \lambda_T \pi_n(p_T)(S)
\]
where the above combination is convex by Lemma \ref{lemma:exnm}. Then let $d^T$ be defined as the multinomial distribution $d^T$ on $T$ just as $d^{(m)}$ is defined for $T_m$ in the previous theorem. Then recall from the proof of the previous theorem that
\[
d^T(S) = \frac{1}{m^n}\sum_{\lambda \vdash n} \binom{n}{\lambda}|M_\lambda^T|
\]
where $M_\lambda^T = \{A \in M_n^T |T_A = S, ~poly(A) \neq 0, ~wt(A) = \lambda \}$. Also recall from the proof of the previous theorem that $|M_{(1,1,\ldots,1)}^T| = c_T(S)$. Combining these facts with the definition of $\pi_n(p_T)$ and the triangle inequality gives 
\begin{equation}
\label{eq:tri}
    |\pi_n(p_T)(S)- d^T(S)| \leq \bigg|\frac{c_T(S)}{\binom{m}{n}} - \frac{n!c_T(S)}{m^n}\bigg| + \bigg|\frac{1}{m^n} \sum_{\substack{\lambda \vdash n \\ \lambda \neq (1,1,\ldots,1)}} \binom{n}{\lambda}|M_\lambda^T| \bigg|
\end{equation}
and we now bound each term on the right hand side of this inequality. 

To bound the first term in equation (\ref{eq:tri}), note that $c_T(S)$ is a nonnegative quantity and is bounded above by $\binom{m}{n}$. This gives the inequality
\begin{equation}
\label{eq:firstterm}
    \bigg|\frac{c_T(S)}{\binom{m}{n}} - \frac{n!c_T(S)}{m^n}\bigg| \leq
    \bigg| 1-\frac{\frac{m!}{(m-n)!}}{m^n} \bigg| \leq \bigg|\frac{m^n - (m-n)^n}{m^n} \bigg| \leq \frac{C_1}{m}
\end{equation}
where $C_1 \in \mathbb{R}$ is a constant. Note that this constant does not depend on the trees $T$ and $S$.

To bound the second term we again recall from the proof of the previous theorem that $|M_\lambda^T| \leq l(\lambda)!\binom{m}{l(\lambda)}$ for each partition $\lambda$ of $n$. Then we have that
\begin{equation}
\label{eq:secondterm1}
\bigg|\frac{1}{m^n} \sum_{\substack{\lambda \vdash n \\ \lambda \neq (1,1,\ldots,1)}} \binom{n}{\lambda}|M_\lambda^T| \bigg| \leq
\sum_{\substack{\lambda \vdash n \\ \lambda \neq (1,1,\ldots,1)}} \binom{n}{\lambda}\frac{l(\lambda)! \binom{m}{l(\lambda)}}{m^n}
\end{equation}
but since $\lambda \neq (1,1,\ldots,1)$, it must be that $l(\lambda) \leq n-1$ so $l(\lambda)!\binom{m}{l(\lambda)} \leq m^{n-1}$ for all the remaining partitions $\lambda$. Applying this fact to the right hand side of equation (\ref{eq:secondterm1}) gives the bound
\begin{equation}
\label{eq:secondterm2}
    \bigg|\frac{1}{m^n} \sum_{\substack{\lambda \vdash n \\ \lambda \neq (1,1,\ldots,1)}} \binom{n}{\lambda}|M_\lambda^T| \bigg| \leq
    \frac{1}{m} \sum_{\substack{\lambda \vdash n \\ \lambda \neq (1,1,\ldots,1)}} \binom{n}{\lambda} \leq \frac{C_2}{m}
\end{equation}
where $C_2 \in \mathbb{R}$ is a constant that also does not depend on the trees $T$ and $S$. Applying the bounds for each term to equation (\ref{eq:tri}) and setting $C=C_1 + C_2$ gives 
\begin{equation}
\label{eq:perTreeBound}
    |\pi_n(p_T)(S)- d^T(S)| \leq \frac{C}{m}
\end{equation}
and again we note that $C$ is independent of the trees $T$ and $S$ since $C_1$ and $C_2$ are. We are now ready to construct a distribution $d \in EX_n^\infty$ that gives the desired result. 
From the discussion of the multinomial model, we have that each distribution $d^T \in EX_n^\infty$ and so from the convexity of $EX_n^\infty$ we get
\[
d = \sum_{T \in RB_U(m)}\lambda_T d^T \in EX_n^\infty.
\]
We can now use the expression for $p$ we began with and the bound obtained in equation (\ref{eq:perTreeBound}) to get that 
\[
|p(S) - d(S)| \leq 
\sum_{T \in RB_U(m)} \lambda_T |\pi_n(p_T)(S) - d^T(S)| \leq
\frac{C}{m}.
\]

\end{proof}

Theorem \ref{thm:indmulti} gives that the limit of any convergent sequence 
$(v_m)_{m \geq 1}$ where $v_m \in V(EX_n^m)$ can also be realized 
as the limit of points coming from multinomial models. Corollary \ref{cor:approxIndMulti} shows that if we have a distribution in $EX_n$ that can be extended to part of a finitely sampling consistent family, then it can be approximated with an infinitely sampling consistent distribution. 
With Theorem \ref{thm:indmulti} and the following theorem, we will show that 
$EX_n^\infty$ is actually the convex hull of all limits of 
convergent sequences of vertices, and thus the convex hull of limits 
of distributions drawn from the multinomial model.  To do this
we need a basic proposition from convex analysis which the proof of is included for completeness.  

\begin{prop}
\label{thm:vertlims}
Let $(P_m)_{m \geq 1}$ be a sequence of polytopes in $\mathbb{R}^n$ 
such that for all $m \geq 1$, $P_{m+1} \subseteq P_m$. Let
\[
P =  \overline{ {\rm conv}(
\{\lim_{m \to \infty}  v_{i_m}^{(m)} | v_{i_m}^{(m)} \in V(P_m) 
\mbox{ and } 
(v_{i_m}^{(m)})_{m \geq 1} \mbox{ converges }\} ) }
\]
where the bar denotes the closure in the Euclidean topology.  
Then $P  = \cap_{m=1}^\infty P_m$.
\end{prop}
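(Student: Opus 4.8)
The plan is to prove the two inclusions $P \subseteq \bigcap_{m} P_m$ and $\bigcap_m P_m \subseteq P$ separately. For the first inclusion, fix $k \geq 1$ and observe that each convergent sequence $(v_{i_m}^{(m)})_{m\geq 1}$ with $v_{i_m}^{(m)} \in V(P_m)$ has, for $m \geq k$, all its terms in $P_m \subseteq P_k$ by the nesting hypothesis. Since $P_k$ is closed, the limit lies in $P_k$. Hence every such limit point lies in $\bigcap_{m\geq 1} P_m$, and since $\bigcap_m P_m$ is convex and closed (an intersection of convex closed sets), it contains the closed convex hull $P$. This direction is routine.

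For the reverse inclusion, let $x \in \bigcap_m P_m$; I want to exhibit $x$ as a limit of convex combinations of limits of vertex-sequences, so it suffices (by closedness of $P$) to approximate $x$ arbitrarily well by points of $P$. The key idea is a diagonal/compactness argument. Fix $m$. Since $x \in P_m$, Carath\'eodory's theorem in $\mathbb{R}^n$ lets us write $x = \sum_{j=0}^{n} \lambda_j^{(m)} w_j^{(m)}$ with $w_j^{(m)} \in V(P_m)$, $\lambda_j^{(m)} \geq 0$, $\sum_j \lambda_j^{(m)} = 1$ (the number of summands can be taken to be exactly $n+1$, padding with repeats if necessary). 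Now the vertices of $P_m$ lie in $P_1$, which is bounded, so all the vectors $(w_0^{(m)}, \ldots, w_n^{(m)}, \lambda_0^{(m)}, \ldots, \lambda_n^{(m)})$ live in a fixed compact set; pass to a subsequence $m \in M'$ along which each $w_j^{(m)} \to w_j$ and each $\lambda_j^{(m)} \to \lambda_j$. Then $x = \sum_j \lambda_j w_j$ with $\lambda_j \geq 0$, $\sum_j \lambda_j = 1$. It remains to argue that each $w_j$ is itself (or can be replaced by) a limit of genuine vertex-sequences $v_{i_m}^{(m)} \in V(P_m)$ ranging over \emph{all} $m \geq 1$, not just $m \in M'$: for $m \in M'$ take $v^{(m)} = w_j^{(m)}$, and for the missing indices $m \notin M'$ interpolate by choosing, e.g., a nearest vertex of $P_m$ — but one must ensure convergence. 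A cleaner route is to note that since $P_{m+1}\subseteq P_m$, for any $m$ we may first replace $m$ by the largest element of $M'$ that is $\le m$ — no; instead, the simplest fix is to redefine the sequence: for each $m\ge 1$ let $\mu(m)=\min\{k\in M': k\ge m\}$ and set $v^{(m)}$ to be any vertex of $P_m$ minimizing distance to $w_j^{(\mu(m))}$. One then checks $v^{(m)}\to w_j$ using that $w_j^{(\mu(m))}\to w_j$ and that $w_j^{(\mu(m))}\in P_m$ so its distance to $V(P_m)$ is controlled. Thus each $w_j\in P$, hence $x\in \operatorname{conv}(P)\subseteq P$.

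The main obstacle is exactly this last technical point: the definition of $P$ quantifies over sequences indexed by \emph{all} $m$, so a vertex-data sequence defined only along a subsequence $M'$ must be extended to all indices while preserving convergence. This is where the nesting $P_{m+1}\subseteq P_m$ is essential — it guarantees that the "good" points $w_j^{(\mu(m))}$ obtained at a later stage already lie in the earlier polytope $P_m$, so a nearest-vertex projection onto $V(P_m)$ cannot drift far. I expect the write-up of this extension step, together with a short lemma that $\operatorname{dist}(y, V(Q)) \to 0$ whenever $y \in Q$ and $Q$ shrinks to a point-neighborhood of $y$, to be the only place requiring genuine care; everything else (Carath\'eodory, compactness, closedness of convex hulls) is standard convex analysis.
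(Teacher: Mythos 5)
Your argument for $P \subseteq \bigcap_m P_m$ matches the paper's. For the reverse inclusion your route is genuinely different: the paper argues by contradiction, separating a hypothetical $p \in \bigl(\bigcap_m P_m\bigr) \setminus P$ from $P$ by an affine functional $\ell$ and then finding, for each $m$, a vertex of $P_m$ on the nonpositive side of $\ell$; you instead decompose $x$ directly via Carath\'eodory and extract a subsequence $M'$ along which all $n+1$ vertex sequences and weights converge. Both routes are legitimate, and both arrive at the same delicate point: one is left holding limits of vertices taken only along a subsequence, while the definition of $P$ quantifies over sequences indexed by \emph{all} $m$. You are right to flag this as the crux (the paper's own write-up simply asserts that the limit of its convergent subsequence lies in $P$).

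The trouble is that your proposed repair of this step does not work. You set $v^{(m)}$ to be a vertex of $P_m$ nearest to $w_j^{(\mu(m))}$ and claim convergence because $w_j^{(\mu(m))} \in P_m$, ``so its distance to $V(P_m)$ is controlled.'' Membership in a polytope gives no control on the distance to its vertex set: the center of $[0,1]^2$ is at distance $\sqrt{2}/2$ from every vertex, and nothing in your setup prevents $w_j^{(\mu(m))}$ --- a vertex of the \emph{later} polytope $P_{\mu(m)}$, which may sit deep inside $P_m$ --- from being far from $V(P_m)$ for infinitely many $m \notin M'$. The auxiliary lemma you invoke requires the $P_m$ to shrink to a neighborhood of $w_j$, which is not available here: $Q := \bigcap_m P_m$ can be full-dimensional and $w_j$ need not be an extreme point of it. A correct way to manufacture genuinely convergent \emph{full} sequences of vertices is to fix a direction $u$ that exposes a point $w$ of $Q$ and let $v^{(m)}$ be a vertex of $P_m$ maximizing $\langle u, \cdot\rangle$: the support functions satisfy $h_{P_m}(u) \downarrow h_Q(u)$, every accumulation point of $(v^{(m)})_{m \geq 1}$ lies in $Q$ and attains $h_Q(u)$, hence equals $w$, so the whole sequence converges to $w$ and $w \in P$. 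Since by Straszewicz's theorem the exposed points of $Q$ are dense in its extreme points, $Q = \overline{\mathrm{conv}(\{\text{exposed points of } Q\})} \subseteq P$. Substituting this for your interpolation step (or for the final step of the paper's separation argument) closes the gap.
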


\begin{proof}
It is straightforward to see that $P \subseteq \cap_{m=1}^\infty P_m$. 
To show that the sets are equal suppose that there is 
$p \in (\cap_{m=1}^\infty P_m ) \setminus P$. Then the Basic Separation Theorem
of convex analysis implies there must exist an affine functional 
$\ell$ with $\ell(p) \leq 0$ and $\ell(w) > 0$ for all $w \in P$. 
We also have that since $p \in \cap_{m=1}^\infty P_m$, for each $m \geq 1$, $p$ can be written as
\[
p = \sum_{j=1}^{k_m} \lambda_j v_j^{(m)}
\]
where the $v_j^{(m)}$ are the vertices of $P_m$. 
Then because $\ell(p) < 0$ it must be that for each $m$, 
there exists at least one vertex $v_{i_m}^{(m)}$ of $P_m$ such that $\ell(v_{i_m}^{(m)}) < 0$. 
Since all the points $v_j^m$  lie in $P_1$ which is a compact set, 
there exists a convergent subsequence 
$(v_{i_{m_k}}^{(m_k)})_{k \geq 1}$ with limit $v \in P$, thus $\ell(v) > 0$. But it also holds that
\[
\ell(v) = \lim_{k \to \infty} \ell(v_{i_{m_k}}^{(m_k)}) \leq 0 
\]
which is a contradiction. 
\end{proof}

\begin{cor}
\label{cor:exmultilims}
Let $d_{T_m}^{(m)}$ denote the specific multinomial model construction on the tree $T_m \in RB_U(m)$ described in Theorem \ref{thm:indmulti}.  
Then
\[
EX_n^\infty = \overline{{\rm conv}(\{\lim_{m \to \infty}d_{T_m}^{(m)}| \pi_n(T_m) \in V(EX_n^m) \mbox{  and } d_{T_m}^{(m)}  \mbox{ converges } \} ).  }
\]
\end{cor}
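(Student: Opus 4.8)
The plan is to combine Theorem \ref{thm:indmulti} with Proposition \ref{thm:vertlims}, applying the latter to the decreasing sequence of polytopes $P_m = EX_n^m$. First I would note that this sequence is indeed nested: $EX_{n+1}^m \subseteq \cdots$ — more precisely, Lemma \ref{lem:monotone} gives $EX_n^{m+1} \subseteq EX_n^m$ for all $m > n$, so the hypothesis of Proposition \ref{thm:vertlims} is satisfied (re-indexing so $P_1 = EX_n^{n+1}$, say, or simply observing the proposition's proof only uses the nesting). Since $EX_n^\infty = \cap_{m=n}^\infty EX_n^m$ by definition, Proposition \ref{thm:vertlims} immediately yields
\[
EX_n^\infty = \overline{{\rm conv}\big(\{ \textstyle\lim_{m\to\infty} v^{(m)} \mid v^{(m)} \in V(EX_n^m),\ (v^{(m)})_{m\geq 1}\text{ converges}\}\big)}.
\]

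Next I would reconcile the vertex set $V(EX_n^m)$ with the points $\pi_n(T_m)$. By Lemma \ref{lemma:exnm}, every vertex of $EX_n^m$ is of the form $\pi_n(p_T)$ for some $T \in RB_U(m)$, so any convergent sequence of vertices $v^{(m)} = \pi_n(T_m)$ arises from a sequence of tree shapes $(T_m)$ with $\pi_n(T_m) \in V(EX_n^m)$. Conversely every such sequence of tree shapes contributes a point in the convex hull on the right-hand side. Thus the closed convex hull of limits of vertex-sequences equals the closed convex hull of limits of sequences $\pi_n(T_m)$ with $\pi_n(T_m) \in V(EX_n^m)$.

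Finally I would invoke Theorem \ref{thm:indmulti}: given such a convergent sequence $p^{(m)} = \pi_n(T_m) \to p$, the specific multinomial construction $d_{T_m}^{(m)}$ (edge parameters $1/m$ on the leaf edges of $T_m$, zero elsewhere) also converges to $p$. Hence every limit point $\lim_m \pi_n(T_m)$ appearing in the description of $EX_n^\infty$ equals a limit $\lim_m d_{T_m}^{(m)}$, and conversely whenever $d_{T_m}^{(m)}$ converges with $\pi_n(T_m) \in V(EX_n^m)$, its limit coincides with $\lim_m \pi_n(T_m)$, which lies in $EX_n^\infty$ by the above. Replacing the set of limit points in the displayed formula by the corresponding set of multinomial limits — which is the same set — gives exactly the claimed identity. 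The only mild subtlety to check is that convergence of $\pi_n(T_m)$ forces convergence of $d_{T_m}^{(m)}$ to the same limit and vice versa; this is precisely the content of the computation in Theorem \ref{thm:indmulti}, where both sequences were shown to have the common limiting value $\lim_m \frac{n!}{m^n} c_{T_m}(T)$ coordinatewise, so no sequence converges without the other converging to the identical point. There is essentially no obstacle here beyond carefully matching index sets; the real work was already done in Theorem \ref{thm:indmulti} and Proposition \ref{thm:vertlims}.
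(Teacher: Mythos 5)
Your proposal is correct and follows essentially the same route as the paper: apply Proposition \ref{thm:vertlims} to the nested sequence $EX_n^{m+1} \subseteq EX_n^m$, identify the vertices of $EX_n^m$ with points $\pi_n(p_{T_m})$, and then use Theorem \ref{thm:indmulti} to replace each convergent vertex sequence by the corresponding multinomial sequence with the same limit. Your extra remark that convergence of $\pi_n(T_m)$ and of $d_{T_m}^{(m)}$ are equivalent (both differ from $\tfrac{n!}{m^n}c_{T_m}(T)$ by terms tending to zero) is a detail the paper leaves implicit, but it is the same argument.
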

\begin{proof}
Recall that $EX_n^\infty = \cap_{m = n}^\infty EX_n^m$, thus by Proposition \ref{thm:vertlims}, 
\[
EX_n^\infty = \overline{{\rm conv}(\{\lim_{m \to \infty}  
\pi_n(p_{T_m}) | T_m \in RB_U(m) \mbox{ and  } (\pi_n(p_{T_m}))_{m \geq 1} \mbox{ converges } \}) }
\]
since the vertices of $EX_n^m$ correspond to a subset of the points $\pi_n(T_m)$. Applying Theorem \ref{thm:indmulti} to the sequence $(\pi_n(T_m))_{m \geq 1}$ gives the result. 
\end{proof}

Corollary \ref{cor:exmultilims} shows that every exchangeable and
infinitely sampling consistent distribution is either a convex combinations of limits of multinomial distributions or a limit point of points in that set. Understanding the structure of the multinomial models may shed greater light on the structure of $EX_n^\infty$ as a whole. We view Corollary \ref{cor:approxIndMulti} and
Corollary \ref{cor:exmultilims} as the rooted binary tree analogue to Theorems 3 and 4 in \cite{di97}, in essence they are finite forms of a deFinetti-type theorem for rooted binary trees. As previously mentioned, the work done in \cite{formanPitman18} and \cite{forman18} establishes a more typical deFinetti theorem in the sense that it shows every infinitely sampling consistent sequence of distributions can be obtained by sampling from a limit object using techniques from Probability theory.  

We also note that the requirement that the induced subtree densities converge is quite similar to the idea of graph convergence that appears in \cite{lovasz12} and that many of the ideas in the theory of graph limits may also be applied to trees. The very well developed theory of graph limits contains many equivalent versions of the limiting object (see Theorem 11.52 in \cite{lovasz12}). The work done in \cite{formanPitman18} and \cite{forman18} makes the connection between the limiting object,a random real tree, and an infinitely sampling consistent model. It is still unknown if this can be connected to ideas such as tree parameters (the induced subtree density for instance) and to metrics on finite trees as has been done in the theory of graph limits. It seems that many of these equivalences hold but differences in techniques will be required.

\section*{Acknowledgments}

Benjamin Hollering and Seth Sullivant were partially supported by the US National Science Foundation
(DMS 1615660).  Thanks to D\'avid Papp for a helpful conversation regarding Proposition
\ref{thm:vertlims}.

\nocite{*}
\bibliography{ex_sc-2_8}{}

\begin{thebibliography}{10}

\bibitem{al93}
David Aldous.
\newblock Probability distributions on cladograms.
\newblock In {\em Random discrete structures ({M}inneapolis, {MN}, 1993)},
  volume~76 of {\em IMA Vol. Math. Appl.}, pages 1--18. Springer, New York,
  1996.

\bibitem{be15}
Daniel~Irving Bernstein, Lam Si~Tung Ho, Colby Long, Mike Steel, Katherine
  St.~John, and Seth Sullivant.
\newblock Bounds on the expected size of the maximum agreement subtree.
\newblock {\em SIAM J. Discrete Math.}, 29(4):2065--2074, 2015.

\bibitem{br03}
David Bryant, Andy McKenzie, and Mike Steel.
\newblock The size of a maximum agreement subtree for random binary trees.
\newblock In {\em Bioconsensus ({P}iscataway, {NJ}, 2000/2001)}, volume~61 of
  {\em DIMACS Ser. Discrete Math. Theoret. Comput. Sci.}, pages 55--65. Amer.
  Math. Soc., Providence, RI, 2003.

\bibitem{co18}
T.~M. {Coronado}, A.~{Mir}, F.~{Rossell{\'o}}, and G.~{Valiente}.
\newblock {A balance index for phylogenetic trees based on quartets}.
\newblock {\em ArXiv e-prints}, March 2018.

\bibitem{devi07}
D.~M. de~Vienne, T.~Giraud, and O.C. Martin.
\newblock A congruence index for testing topological similarity between trees.
\newblock {\em Bioinformatics}, 23:3119--3124, 2007.

\bibitem{di97}
Persi Diaconis.
\newblock Finite forms of de finetti's theorem on exchangeability.
\newblock {\em Synthese}, 36(2):271--281, Oct 1977.

\bibitem{diaconisJanson}
Persi Diaconis and Svante Janson.
\newblock Graph limits and exchangeable random graphs.
\newblock {\em Rend. Mat. Appl. (7)}, 28(1):33--61, 2008.

\bibitem{ford06}
Daniel~J. Ford.
\newblock {\em Probabilities on cladograms: {I}ntroduction to the alpha model}.
\newblock ProQuest LLC, Ann Arbor, MI, 2006.
\newblock Thesis (Ph.D.)--Stanford University.

\bibitem{forman18}
Noah {Forman}.
\newblock {Mass-structure of weighted real trees}.
\newblock {\em arXiv e-prints}, page arXiv:1801.02700, Jan 2018.

\bibitem{formanPitman18}
Noah Forman, Chris Haulk, and Jim Pitman.
\newblock A representation of exchangeable hierarchies by sampling from random
  real trees.
\newblock {\em Probab. Theory Related Fields}, 172(1-2):1--29, 2018.

\bibitem{winkel08}
B\'{e}n\'{e}dicte Haas, Gr\'{e}gory Miermont, Jim Pitman, and Matthias Winkel.
\newblock Continuum tree asymptotics of discrete fragmentations and
  applications to phylogenetic models.
\newblock {\em Ann. Probab.}, 36(5):1790--1837, 2008.

\bibitem{la17}
S.~L. {Lauritzen}, A.~{Rinaldo}, and K.~{Sadeghi}.
\newblock {On Exchangeability in Network Models}.
\newblock {\em ArXiv e-prints}, September 2017.

\bibitem{lovasz12}
L{\'a}szl{\'o} Lov{\'a}sz.
\newblock {\em Large Networks and Graph Limits.}, volume~60 of {\em Colloquium
  Publications}.
\newblock American Mathematical Society, 2012.

\bibitem{gibbsTrees}
Peter McCullagh, Jim Pitman, and Matthias Winkel.
\newblock Gibbs fragmentation trees.
\newblock {\em Bernoulli}, 14(4):988--1002, 2008.

\bibitem{st16}
Mike Steel.
\newblock {\em Phylogeny---discrete and random processes in evolution},
  volume~89 of {\em CBMS-NSF Regional Conference Series in Applied
  Mathematics}.
\newblock Society for Industrial and Applied Mathematics (SIAM), Philadelphia,
  PA, 2016.

\bibitem{wakeley2008}
J~Wakeley.
\newblock {\em Coalescent Theory: An Introduction}.
\newblock W.H.~Freeman, 2008.

\end{thebibliography}
\bibliographystyle{plain}

\end{document}